\newcommand{\cmark}{\ding{51}}%
\newcommand{\xmark}{\ding{55}}
\setlist[enumerate]{leftmargin=.5in}
\setlist[itemize]{leftmargin=.5in}
\newcommand{\mcl}{\mathcal}
\newcommand{\mbf}{\mathbf}
\newcommand{\mbb}{\mathbb}
\newcommand{\Div}{\text{div}}
\newcommand{\dd}{{\rm d}}
\newcommand{\bs}{\mbf s}
\newcommand{\bx}{\mbf x}
\newcommand{\bz}{\mbf z}
\newcommand{\by}{\mbf y}
\newcommand{\bw}{\mbf w}
\newcommand{\veps}{\varepsilon}
\newcommand{\bphi}{{\bm{\phi}}}
\newcommand{\btheta}{\bm{\theta}}
\newcommand{\mL}{\mcl{L}}
\newcommand{\mB}{\mcl{B}}
\newcommand{\mU}{\mcl{U}}
\newcommand{\mP}{\mcl{P}}
\newcommand{\mT}{\mcl{T}}
\newcommand{\R}{\mbb{R}}
\newcommand{\st}{{\rm\,s.t.}}
\newcommand{\M}{\mathcal{M}}
\newcommand{\hF}{\overline{F}}
\definecolor{darkred}{rgb}{.7,0,0}
\definecolor{darkgreen}{rgb}{.15,.55,0}
\definecolor{darkblue}{rgb}{0,0,0.7}
\DeclareMathOperator*{\argmin}{arg\,min}
\DeclareMathOperator*{\minimize}{{\rm minimize}}
\title{Error Analysis of 
Kernel/GP Methods for Nonlinear and Parametric PDEs}
\author{Pau Batlle\thanks{Computing and Mathematical Sciences, Caltech, Pasadena, CA
    (\email{pbatllef@caltech.edu}, \email{yifanc@caltech.edu},
    \email{owhadi@caltech.edu}, \email{astuart@caltech.edu}).}
  \and Yifan Chen\footnotemark[1]
  \and Bamdad Hosseini\thanks{Department of Applied Mathematics, University of Washington, Seattle, WA
  (\email{bamdadh@uw.edu})}
  \and Houman Owhadi\footnotemark[1]
  \and  Andrew M Stuart\footnotemark[1]
}
\begin{document}

\maketitle

\begin{abstract}
We introduce a priori Sobolev-space error estimates for the solution of
nonlinear, and possibly parametric, PDEs using Gaussian process and kernel based methods.
The primary assumptions are: (1) a continuous embedding of the reproducing kernel Hilbert space of the kernel into a Sobolev space of sufficient regularity; and (2) the stability of the 
differential operator and the solution map of the PDE
between corresponding Sobolev spaces. The proof is articulated around Sobolev norm error estimates for kernel interpolants and relies on the minimizing norm property of the solution. The error estimates demonstrate dimension-benign convergence rates if the solution space of the PDE is smooth enough. We illustrate these points with applications to high-dimensional nonlinear elliptic PDEs and parametric PDEs. 
 Although some recent machine learning methods have been presented as breaking the curse of dimensionality in solving 
 high-dimensional PDEs, our analysis suggests a more nuanced picture: there is a trade-off between the regularity of the solution and the presence of the curse of dimensionality. Therefore, our results are in line 
 with the understanding that the curse is absent when the solution is regular enough. 
\end{abstract}


\begin{keywords}
Kernel Methods, Gaussian Processes, Optimal Recovery,
Nonlinear PDEs, High-dimensional PDEs, Parametric PDEs. 
\end{keywords}

\begin{AMS}
60G15, 
65M75, 
65N75, 
65N35, 
47B34, 
41A15, 
35R30, 
34B15. 
\end{AMS}

\section{Introduction}\label{sec:introduction}
In recent years the adoption of machine learning in the natural sciences and engineering 
has led to the development of new methods for solving PDEs 
\cite{raissi2019physics, weinan2017deep, weinan2018deep, li2020fourier, Lu2021}. The majority of these 
methods rely on the approximation power of artificial neural networks (ANNs)
either as a function class to approximate the solution of the PDE 
or as a high-dimensional function class to approximate the solution map of the PDE. 
Despite the empirical success of the aforementioned ANN based methods, current 
theoretical understanding of these PDE solvers is  scarce and, beyond particular PDEs (e.g., \cite{shin2020convergence, lu2021machine, de2022error}),  results are oftentimes limited to existence results rather than convergence guarantees or rates.

Similar to ANNs, kernel methods and Gaussian processes (GPs) have been very effective in scientific computing and machine learning \cite{scholkopf2018learning, muandet2017kernel, berlinet2011reproducing, williams1996gaussian} and at the same time they are supported by rigorous theoretical foundation \cite{berlinet2011reproducing, wendland2004scattered, owhadi2019operator}. Recently in \cite{CHEN2021110668}, the authors introduced a kernel collocation method for solving arbitrary nonlinear PDEs with a rigorous convergence guarantee. The theory presented in that work 
was based on the assumptions that (1) the solution belongs to the 
reproducing kernel Hilbert space (RKHS) defined by the underlying kernel 
which in turn is embedded in the Sobolev space $H^s$ for $s>d/2 \: +$``order of the PDE'' (where $d$ is the dimension of the domain of the PDE) and (2) the fill-distance between collocation points goes to zero. Convergence was proved via a compactness argument but 
no convergence rates were provided. 

The goal of this article is to provide quantitative convergence rates for the 
PDE solver introduced in \cite{CHEN2021110668}. Our quantitaitve rates 
also reveal the interplay between the regularity of the solution of the PDE and the dimension 
$d$ of the problem. At the same time we make improvements 
to the methodology of \cite{CHEN2021110668} and extend it to the case of parametric PDEs. In the rest of this section we summarize our main contributions in \Cref{sec:contributions} followed by a review of the relevant literature in \Cref{sec:relevant-lit}, and 
an outline of the article in \Cref{sec:outline}.


\subsection{Our Contributions}\label{sec:contributions}
Throughout the article we consider parametric PDEs of the form 
\begin{equation}
  \label{abstract-parametric-PDE-intro}
  \left\{
    \begin{aligned}
      \mP(u^\star) (\bx; \btheta) & = f(\bx; \btheta), && (\bx, \btheta) \in
      \Omega \times \Theta, \\
      \mB(u^\star) (\bx; \btheta) & = g(\bx; \btheta), && (\bx, \btheta) \in
      \partial \Omega \times \Theta,
    \end{aligned}
    \right.
\end{equation}
where $\Omega \subset \R^d$ is a bounded connected domain with an 
appropriately smooth boundary 
$\partial \Omega$, $\mP$ and $\mB$ are the interior and boundary differential operators 
that define the PDE and $f, g$ are the source and boundary data.  $\bx$ denotes the 
spatial variable with $\btheta$ denoting a parameter belonging to a compact 
set $\Theta \subset \R^p$. The function $u^\star$ denotes the exact, strong solution 
of this PDE.

We view the solution $u^\star$ as a function on $\overline{\Omega} \times \Theta$
and approximate it in an appropriate RKHS by imposing the PDE as a constraint on a
set of collocation points 
in the product space $\overline{\Omega} \times \Theta$. 
Our main contributions are three-fold as summarized below:

\begin{enumerate}
    \item We extend the kernel PDE solver of \cite{CHEN2021110668} to the case of the
    parametric PDE \eqref{abstract-parametric-PDE-intro}. 
    This extension follows by viewing the solution  $u^\star(\bx; \theta)$ as a continuous function 
    defined on $\overline{\Omega} \times \Theta$ and approximating it with a function $u^\dagger$ in an appropriate RKHS $\mU$
    after imposing the PDE as a constraint on a set of collocation points. 
    At the same time we improve the efficacy and performance  of the Gauss-Newton (GN) algorithm of \cite{CHEN2021110668} through an approach of ``linearize first then apply the kernel solver''. For many prototypical PDEs this new approach leads to smaller kernel matrices that can be factored or inverted more efficiently.
    These numerical strategies are 
    outlined in \Cref{sec:review}.
    
    \item  We provide explicit a priori convergence rates for the kernel estimator $u^\dagger \in \mU$ 
    to the true solution $u^\star$. Our proof relies on three assumptions: (1) the RKHS 
    $\mU \subset H^s(\Omega)$ for $s > (d+p)/2 + \text{``order of the PDE''}$; (2) the true 
    unique solution $u^\star \in \mU$; (3) the forward PDE operator and the associated 
    solution map of the PDE are Lipschitz stable. Our error estimates are of the general form 
    \begin{equation}\label{abstract-rate}
        \| u^\dagger - u^\star \|_{L^2(\Omega)} \lesssim h^{s} \| u^\star \|_{\mU}, 
    \end{equation}
    where $h$ is the fill-distance (mesh-norm) of our collocation points. Indeed, if expressed 
    in terms of $N$, the number of collocation points, the rate will read as $\mathcal{O}(N^{-s/(d+p)})$.
    The above rate indicates a trade-off between the regularity of the solution space and the dimension $d+p$ of $\Omega \times \Theta$ stating that the convergence rate is dimension-benign 
    so long as the solution $u^\star$ is sufficiently regular; these results are outlined in \Cref{sec:error-bounds}.


         \item In fact, our method for proving the rate \eqref{abstract-rate} is more general 
    than the case of 
    PDEs (see \Cref{fig:proof-steps} for the road map of the proof technique).  
    The proof can be viewed as a  recipe for convergence analysis of solutions to 
    nonlinear functional equations of the form $\mP(u) = f$ where $u,f$ belong to sufficiently 
    regular function spaces and $\mP$ is invertible (at least locally). 
    Then the Lipschitz 
    stability of $\mP$ and $\mP^{-1}$ plus RKHS interpolation bounds on $f$ 
    yield convergence rates for $u$. Results at this level of generality are presented  in \cite{schaback2016all}
    for linear maps $\mP$ which are then extended to nonlinear problems in \cite{bohmer2013nonlinear,bohmer2020nonlinear}. In these works, the stability of the discretization method is furthermore assumed. In the GP methodology, this property is guaranteed, due to the minimal RKHS norm property of the solution. This has been pointed out in \cite[Sec. 10]{schaback2016all} for linear PDEs. Our theory can be seen as a generalization of the result in \cite{schaback2016all} to the nonlinear case.

    \item We present a suite of numerical experiments that elucidate and extend our theoretical 
    analysis in item 2. We present an example of a nonlinear elliptic PDE with a prescribed 
    solution of varying regularity in various dimensions. We then 
    explore the interplay between regularity and dimensionality as well as the 
    rate in \eqref{abstract-rate}.
    We further verify our result for a one dimensional parametric PDE by varying $p$, the dimension 
    of the parameter space $\Theta$. 
    Because of this trade-off between regularity and dimensionality, 
    showing that a numerical method remains accurate for a high dimensional PDE may not be an indication that it is breaking the curse of dimensionality but simply an indication that the problem being solved is very regular; see our experiments in \Cref{sec:numerics} and 
    in particular \Cref{section: High D HJB eqn}.


\end{enumerate}

\begin{figure}
    \centering
    \includegraphics[clip, trim = {0cm 2.5cm 10cm 2cm}, width = .8 \textwidth]{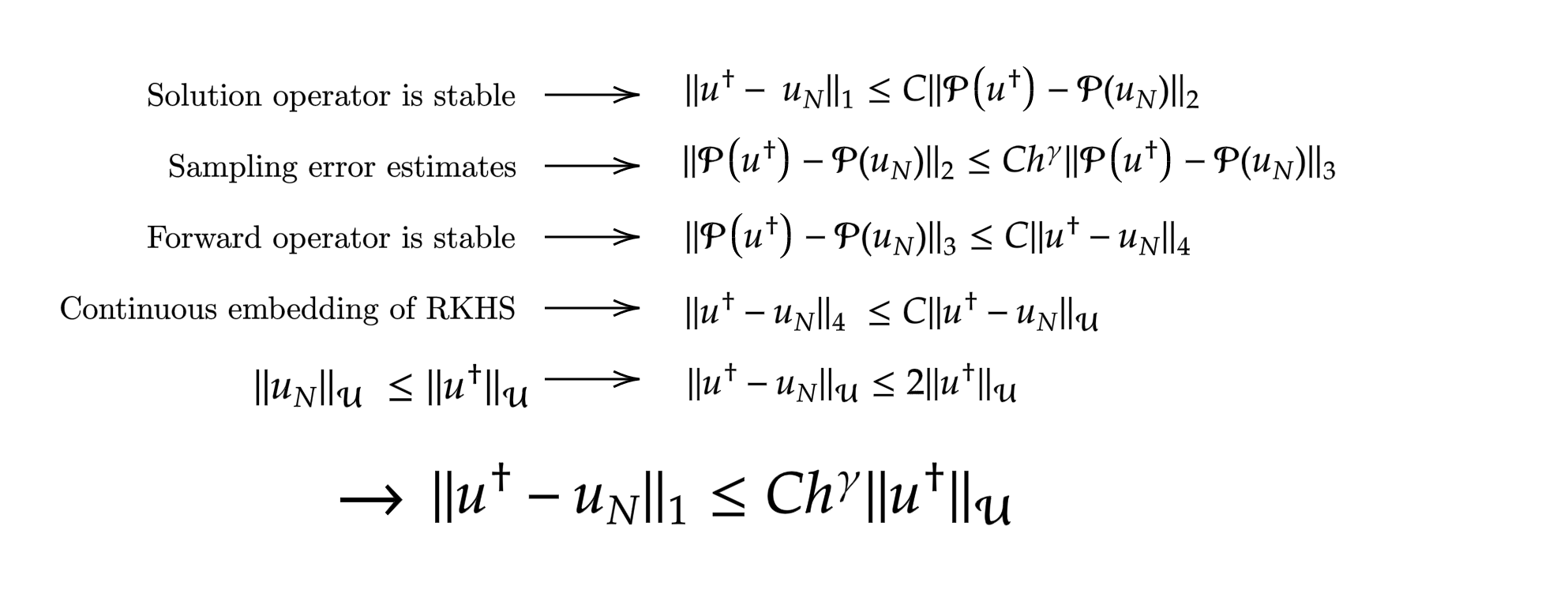}
    \caption{A summary of the main steps in our proof of convergence rates 
outlined in \Cref{thm:abstract-error-analysis,thm:PDE-error-bound-no-BC,thm:PDE-error-bound-with-BC,thm:parametric-PDE-error-bound}. The 1--4 norms denote 
arbitrary norms on appropriate Banach spaces while the $\|\cdot\|_{\mathcal{U}}$-norm  
can be chosen as an RKHS norm or another desired norm with respect to which the numerical algorithm is stable.}
    \label{fig:proof-steps}
\end{figure}

\subsection{Relevant Literature}\label{sec:relevant-lit}
Below we present a brief review of the relevant literature to the current work. 

\subsubsection{Kernel and Gaussian Process Solvers for PDEs}
As mentioned earlier our algorithmic and theoretical developments are focused on 
the kernel method introduced in \cite{CHEN2021110668} and extending that approach to parametric PDEs. Further extensions and applications 
of the aforementioned framework can 
also be found in \cite{mou2022numerical, meng2022sparse, long2022kernel, chen2023sparse}. When applied to  linear PDEs our kernel method coincides 
with the so-called symmetric collocation method \cite[Sec.~14]{schaback2006kernel} and is closely 
associated with  radial basis function (RBF)  PDE solvers \cite{fornberg2015solving, franke1998solving}.  
Various error analyses for RBF  collocation methods  can be found in
\cite{franke1998convergence,franke1998solving}.
 In particular, the article   \cite{giesl2007meshless} is the closest to our work and their rates
 coincide with ours in the linear PDE setting.
The article \cite{cheung2018h} presents similar bounds for the so-called Kansa method 
\cite{kansa1990multiquadrics-I, kansa1990multiquadrics-II}, a non-symmetric 
RBF collocation PDE solver.  Finally, \cite{schaback2016all} presents 
an abstract set of convergence rates for RBF interpolation of ``well-posed'' linear maps between 
regular function spaces that includes RBF PDE solvers as a special case. 
All of the aforementioned analyses consider linear PDEs and some generalizations to nonlinear problems are studied in \cite{bohmer2013nonlinear,bohmer2020nonlinear}. 

The deep connection of  Kernels and RKHSs to the theory of GPs 
\cite{bogachev1998gaussian, larkin1972gaussian, williams1996gaussian, van2008reproducing}
suggests that kernel PDE solvers can be viewed from lens of probability theory 
as a conditioning problem for GPs. While not as extensively developed as the 
kernel solvers mentioned earlier, this direction has been explored for the solution of linear PDEs
as well as nonlinear ODEs
\cite{chkrebtii2016bayesian, cockayne2019bayesian, Owhadi:2014, sarkka2011linear, swiler2020survey} and recent 
works have extended this idea to  some nonlinear and time-dependent PDEs 
\cite{cockayne2016probabilistic, raissi2018numerical, wang2021bayesian}.
The GP interpretation is attractive due to the ability to provide rigorous uncertainty 
estimates along with the solution to the PDE. The idea here is that the uncertainties can serve 
as a posterior or a priori error indicators for the PDE solver. Some ideas related to 
this direction were discussed in \cite{CHEN2021110668, cockayne2016probabilistic}.
A fully probabilistic GP interpretation of our kernel framework
for linear PDEs can be found in \cite{Owhadi:2014,cockayne2016probabilistic, owhadi2019operator} but the case of 
nonlinear PDEs remains 
partially investigated \cite{CHEN2021110668,meng2022sparse,mou2022numerical,long2022autoip}.
Moreover we note that in the GP framework, hierarchical Bayes learning can be used to select kernels to get better convergence rates \cite{chen2020consistency,wilson2016deep,owhadi2019kernel,darcy2023one}.

\subsubsection{Parametric and High-dimensional PDEs}
Parametric PDEs are ubiquitous is physical sciences and engineering
and in particular in the context of uncertainty quantification (UQ)  and solution of stochastic PDEs (SPDEs) \cite{cialenco2012approximation, ghanem2003stochastic, le2010spectral, ye2013kernel}.
A vast literature exists on the subject, connecting it 
to reduced basis models \cite{almroth1978automatic, noor1980reduced}, 
emulation of computer codes \cite{kennedy2001bayesian},  reduced order 
models \cite{lucia2004reduced}, and numerical homogenization \cite{owhadi2019operator};
for settings that most closely resemble our problems we
refer the reader to \cite{ghanem2003stochastic, xiu2010numerical, cohen2015approximation} 
for a general overview. 
Broadly speaking, the dominant approaches for approximation of high-dimensional and 
parametric solution maps include
polynomial/Taylor approximation methods \cite{beck2012optimal, Chkifa2012, chkifa2014high, nobile2008sparse, nobile2008anisotropic};
Galerkin methods \cite{gunzburger2014stochastic, cohen2010convergence};
 reduced basis methods \cite{hesthaven2016certified}; and more recently 
 ANN operator learning techniques such as \cite{li2020fourier, Lu2021}. 
In comparison to the aforementioned works we propose to 
directly approximate the solution of the parametric PDE as a function 
on the tensor product space of the physical and parameter domains 
in a similar spirit as \cite{kempf2019kernel}. The recent article \cite{batlle2023kernel} also presents a kernel based operator learning 
approach to various PDE problems including parametric PDEs.



\subsubsection{The Curse of Dimensionality}







Although the trade-off
between regularity and accuracy is well understood in numerical approximation/integration, where it has led to the development of the Kolmogorov $N$-width and stress tests for finite-element methods \cite{pinkus2012n,melenk2000n,babuvska2000can},  its impact is oftentimes overlooked when communicating the convergence of Machine Learning  and Deep Learning methods for high-dimensional PDEs. In particular, since artificial neural networks (ANNs) can be interpreted as kernel methods \cite{neal1996priors,lee2017deep,owhadi2020ideas} with data-dependent parameterized kernels, our results raise the further question of understanding whether the (empirically observed) convergence of ANN-based methods for high-dimensional PDEs is an indication of the absence of the curse (i.e., the regularity of the solution in selected numerical experiments is high) or the breaking of that curse. 
In particular, empirically observing numerical accuracy for an algorithm and particular solutions is insufficient to prove that the curse of dimensionality is broken, and one must also show that the underlying problem and those solutions are not too regular. We emphasize that the curse of dimensionality referred to here, is the one associated with the worsening of the accuracy of a numerical approximation algorithm as a function of the dimension of the domain of the PDE as opposed to the impact of the curse on the number of degrees of freedom in the implementation level (e.g., finite difference methods suffer from that second curse but ANN/kernel based methods do not).

\subsection{On the Importance of Developing and Benchmarking against Kernel/GP Methods}

The proposed work aims to further develop Gaussian Process (GP) and kernel methods for solving PDEs. 
We are motivated to do so because GP methods have the potential to offer the best of both worlds by combining the strengths and flexibility of traditional and Deep Learning (DL) methods while also being equipped with
theoretical guarantees and uncertainty quantification (UQ) capabilities. 

Compared to traditional methods (such as finite element methods (FEM), finite volume methods (FVM), finite difference methods (FDM), 
spectral methods, etc), GP methods generalize  meshless, RBF, optimal recovery methods and are flexible and applicable in high dimensions. Compared to DL methods that use an expressive neural network representation, GPs offer transparent methods that are easy to reproduce and analyze.
Furthermore the natural probabilistic interpretation of GPs enables convenient UQ
and also facilitates the process of scientific discovery itself \cite{owhadi2019operator};
FEM and DL methods do not interface so cleanly with UQ. Moreover, with hierarchical kernel learning \cite{chen2020consistency,wilson2016deep,owhadi2019kernel,darcy2023one}, GP methods can also be made highly expressive. In fact, as shown in the table below, GP methods can offer many advantages over traditional and DL methods. 
In the context of PDEs
these advantages include: greater flexibility, applicability in high dimensions, provable guarantees, near-linear complexity computation, Occam's razor principle in the design of statistical models, mathematical transparency and interpretability, and ease of reproducibility; see \Cref{tab:GPs-are-the-best}. Although software support for GPs is currently not as advanced as that for DL and traditional methods, GPs are still easy to program and can be seamlessly integrated into an engineering pipeline.

\begin{table}[htp]
\begin{tiny}
\begin{center}
\begin{tabular}{ |c||c|c|c|c|c|c|c|c|c| }
 \hline 
 Method & \makecell{Ease of \\ implementation \\ in high-dimensions} 
 & \makecell{Provable\\ guarantees} & \makecell{Near \\ linear \\ complexity}& \makecell{Occam's\\ razor} & Transparent & \makecell{Ease of\\ reproducibility} & \makecell{Built-in \\ UQ} & \makecell{Software\\ support}\\
 \hline
 Trad. & \xmark 
 & \cmark & \cmark & \cmark & \cmark & \cmark & \xmark & \cmark \\
 \hline
 Kernel & \cmark 
 & \cmark & \cmark & \cmark & \cmark & \cmark & \cmark & Limited\\
  \hline
 ANN & \cmark 
 & Limited & \xmark & \xmark & \xmark & Limited & \xmark  & \cmark\\
 \hline
\end{tabular}
\end{center}
\end{tiny}
\vspace{1ex}
    \caption{A qualitative comparison of the  properties of traditional 
    PDE solvers (such as FEM, FVM, FDM, spectral methods, etc) against kernel methods and ANNs.}
    \label{tab:GPs-are-the-best}
\end{table}

Given their long training times, ANN-based methods may not be competitive with FEM in low dimensions \cite{grossmann2023can}. In contrast, GP-based methods can 
achieve near-linear complexity when combined with fast algorithms for kernel methods such as the sparse Cholesky factorization \cite{schafer2020sparse, schaafer2021compression, chen2023sparse}. In some applications, these algorithms can be competitive (both in terms of complexity and accuracy) 
even when compared to highly optimized algebraic multigrid solvers such as AMGCL and Trilinos \cite{chen2021multiscale}. GP methods are naturally amenable to analysis and come with simple provable guarantees, while ANN-based methods involve complicated optimizations and many heuristics, which can make them hard to understand.
GP methods fit Occam's razor, offering a clarity of purpose in their structure. We can understand why and when they work, which is of scientific importance \cite{feynman1998cargo}. 
Therefore, it is increasingly important to benchmark deep learning methods against kernel-based methods to ensure that the deep part of a DL method serves a significant purpose beyond adding complexity.

\subsection{Outline of the Article}\label{sec:outline}
The rest of the article is organized as follows: We present a brief overview of our GP and kernel approach for 
solving nonlinear and parametric PDEs in \Cref{sec:review}; our error analysis is outlined in 
\Cref{sec:error-bounds}; followed by numerical experiments in \Cref{sec:numerics} and 
conclusions in \Cref{sec:conclusions}. Auxiliary results are collected in  \Cref{sec:AppA,sec:AppB,sec:AppC}.

\section{Kernel Methods for Parametric PDEs}\label{sec:review}

 In this section we extend the  kernel methodology of \cite{CHEN2021110668} 
 to the case of parametric PDEs as outlined in \Cref{subsec:param-PDE-solve-review}. 
 Some numerical strategies and ideas for improving the efficiency of the solver 
 are discussed  in \Cref{subsec:numerical-strategies}.

\subsection{Solving Parametric PDEs}\label{subsec:param-PDE-solve-review}
Let us consider bounded connected domains $\Omega \in \R^d$ with a Lipschitz boundary for $d \ge 1$
and $\Theta \subset \R^p$ for $p \ge 1$.
We  consider
 nonlinear  and parametric PDEs of the form
\begin{equation}
  \label{abstract-parametric-PDE}
  \left\{
    \begin{aligned}
      \mP(u^\star) (\bx; \btheta) & = f(\bx; \btheta), && (\bx, \btheta) \in
      \Omega \times \Theta, \\
      \mB(u^\star) (\bx; \btheta) & = g(\bx; \btheta), && (\bx, \btheta) \in
      \partial \Omega \times \Theta,
    \end{aligned}
    \right.
  \end{equation}
  where $\mP, \mB$ are nonlinear differential operators in the interior
  and boundary of $\Omega$,  $\btheta$ is a parameter,   
  and $f, g$ are the PDE source and boundary data. 
  For now
  we assume that the above PDE is well-posed and has
  a unique  solution $u^\star(\bx; \btheta)$ which is assumed to exist 
  in the strong sense over $\overline{\Omega}$ and for all values
  of $\btheta \in  \Theta$. 
  In \cite{CHEN2021110668} the authors introduced a GP/kernel method for 
  solving nonlinear PDEs of the form  \eqref{abstract-parametric-PDE} without 
  parametric dependence.
Here we  extend that approach to the  parametric case.

  Let $\Upsilon := \Omega \times \Theta$  and $\partial \Upsilon := \partial \Omega
\times \Theta$ and write $\bs := (\bx, \btheta)$. 
Choose $M \ge 1$ collocation points $\{ \bs_m \}_{m=1}^M \in
  \overline{\Upsilon}$ such that $\{ \bs_m \}_{m=1}^{M_\Omega}
  \in \Upsilon$ and $\{ \bs_m \}_{m=M_\Omega+ 1}^M
  \in \partial \Upsilon$\footnote{Note that we do not specifically ask for collocation points 
  on $\partial \Omega \times \partial \Theta$ since we may not have boundary data on the 
  $\btheta$ parameter.} and  consider a kernel
  $K: \overline{\Upsilon} \times \overline{\Upsilon}  \to \R$ with its corresponding RKHS denoted by $\mU$ and norm $\| \cdot \|_\mU$. 
  We then propose to approximate $u^\star(\bs)$ by solving the optimization
  problem
  \begin{equation}
    \label{parametric-PDE-kernel-opt}
    \left\{
      \begin{aligned}
        \minimize_{u \in \mU}  \quad & \| u \|_\mU \\
        \st \quad  & \mP(u)(\bs_m) = f(\bs_m), && m =1, \dots, M_\Omega, \\
            & \mB(u)(\bs_m) = g( \bs_m), && m=M_\Omega+1, \dots, M.
      \end{aligned}
      \right.
    \end{equation}
    Observe that our approach above approximates the solution $u$ as 
    a function defined on the set product set $\Upsilon$ which is different 
    from previous works \cite{beck2012optimal, Chkifa2012, chkifa2014high, nobile2008sparse, nobile2008anisotropic} 
    where the solution map $\btheta \to u^\star(\cdot; \btheta)$, as a mapping from $\Theta$ 
    to an appropriate function space,
    is characterized and approximated. The latter approach requires
    different discretization methods for the $\btheta$ parameter and
    the functions $u^\star(\cdot; \btheta)$ while our approach leads to
    a meshless collocation
    method on the product space which is desirable and convenient at the level of implementation, following \cite[Sec.~3.1]{CHEN2021110668} (see also \cite{giesl2007meshless}).
    
    We  make the following
    assumption on the differential operators $\mP, \mB$.
    \begin{assumption}\label{assumption:parametric-PDE}
      There exist bounded and linear operators
      $L_1, \dots, L_{Q_\Omega} \in  \mL \big(\mU; C(\Upsilon) \big)$  and 
      $L_{Q_\Omega + 1}, \dots, L_{Q}
      \in \mL\big(\mU; C(\partial \Upsilon) \big)$ for some
       $1 \le Q_\Omega < Q$
      together
      with maps $P: \R^{Q_\Omega} \to \R$ and $B: \R^{Q - Q_\Omega} \to \R$,
      which may
      be nonlinear, so that $\mP, \mB$ can be written as 
      \begin{equation}\label{Paramteric-PDE-differential-operator-assumption}
        \begin{aligned}
          \mP(u)( \bs) & = P \Big( L_1(u)( \bs), \dots, L_{Q_\Omega}(u) (\bs) \Big) && \forall \bs \in \Upsilon, \\
          \mB(u)( \bs) & = B \Big( L_{Q_\Omega + 1}(u)( \bs), \dots, L_{Q}(u) (\bs) \Big) && \forall \bs \in \partial \Upsilon.
      \end{aligned}
      \end{equation}
    \end{assumption}
    We briefly introduce a running example of a parametric PDE for which the above assumptions can 
    be verified easily. 

    \begin{example}[Nonlinear Darcy flow]\label{ex:darcy-flow}
      Consider the nonlinear Darcy flow PDE
      \begin{equation}\label{eq:darcy-flow-nonlinear}
        \left\{
        \begin{aligned}
          -\Div_{\bx} \big( \exp( a (\bx,\btheta) ) \nabla u \big) (\bx) + \tau(u (\bx) )  & = 1, && \bx \in \Omega, \\
          u(\bx) & = 0, && \bx \in  \partial \Omega,
        \end{aligned}
        \right.
    \end{equation}
    where $\Omega \subset \R^d$ is a bounded domain with a Lipschitz boundary and $\tau: \R \to \R$ is a continuous and
    nonlinear map. We assume that the
    permeability field $a$ is parameterized as
    \begin{equation}\label{coef-a-parameterization}
      a(\bx,\btheta) = \sum_{j=1}^p \theta_j \psi_j(\bx),
    \end{equation}
    where $\theta_j \in (0,1)$ so that $\Theta = (0,1)^p$ and $\psi_j \in C(\overline{\Omega})$.
    Substituting  into the PDE
    and expanding the differential operator we can rewrite our nonlinear PDE as
    \begin{equation*}
      \left\{
      \begin{aligned}
        -\exp \left( \sum_{j=1}^p \theta_j \psi_j(\bx) \right)
        \sum_{j=1}^p \theta_j  \nabla_\bx \psi_j(\bx) \cdot \nabla_\bx u(\bx; \btheta) \hspace{10ex} & &&
         \\  - \exp \left( \sum_{j=1}^p \theta_j \psi_j(\bx) \right)
        \Delta_\bx u(\bx; \btheta) + \tau(u(\bx; \btheta) ) &  = 1, && (\bx, \btheta) \in \Omega \times \Theta, \\
          u(\bx; \btheta) &  = 0, && (\bx, \btheta) \in  \partial \Omega \times \Theta,
        \end{aligned}
        \right.
      \end{equation*}
      where we used subscripts on the differential operators to highlight that
      derivatives are computed for the $\bx$ variable
      only and not $\btheta$. We also did not use the compact notation
      $\bs \equiv (\bx, \btheta)$ since it is more helpful to be able to
      distinguish between the $\bx$ and $\btheta$ variables in this example. 
      We can directly
      verify \Cref{assumption:parametric-PDE} with the bounded
      and linear operators
      \begin{equation*}
        \begin{aligned}
          L_1 & : u(\bx; \btheta) \mapsto u (\bx; \btheta), \\
          L_2 & : u(\bx; \btheta) \mapsto
          \exp \left( \sum_{j=1}^p \theta_j \psi_j(\bx) \right)
          \sum_{j=1}^p \theta_j  \nabla_\bx \psi_j(\bx) \cdot \nabla_\bx u(\bx; \btheta), \\
          L_3 &: u(\bx;\btheta) \mapsto
           \exp \left( \sum_{j=1}^p \theta_j \psi_j(\bx) \right)
           \Delta_\bx u(\bx; \btheta) \\
           L_4 & : u(\bx; \btheta) \mapsto u (\bx; \btheta),
      \end{aligned}
    \end{equation*}
    Note that operators $L_1, L_4$ are the same here since the point values of 
    $u$ appear in both the interior and boundary conditions.
    Thus we have $Q_\Omega = 3$ and $Q = 4$ and the maps
      \begin{equation*}
        P(t_1, t_2, t_3) = -t_2 - t_3 + \tau(t_1), \qquad B(t_1) = t_1.
      \end{equation*}
    \end{example}
    If $\mU$ is sufficiently regular and
    \Cref{assumption:parametric-PDE} holds, then we can
    define the functionals
    $\phi_m^{q} \in \mU^\ast$ for $1\leq q \leq Q$ as
    \begin{equation}
      \label{phi-def}
      \phi_{m}^{q} := \delta_{(\bs_m)} \circ L_q,
      \quad \text{where} \quad
      \left\{
      \begin{aligned}
        1 & \le m \le M_\Omega & \text{if} & & 1 \le q \le Q_\Omega \\
        M_\Omega+1 & \le m \le M & \text{if} & & Q_{\Omega+1} \le q \le Q. \\
      \end{aligned}
      \right.
    \end{equation}
    In what follows we write $[\phi, u]$ to denote the duality pairing between 
    $\mU$ and $\mU^\ast$ and 
     further use the shorthand notation $\bphi^{(q)}$ to denote the vector
    of dual elements $\phi^{q}_{m}$ for a fixed index $q$. Note that
    $\bphi^{(q)} \in ( \mU^\ast)^{\otimes M_\Omega}$ if $q \le Q_\Omega$
    but  $\bphi^{(q)} \in ( \mU^\ast)^{\otimes (M - M_\Omega)}$ if $q > Q_\Omega$
    in order to accommodate different differential
    operators defining the PDE and the boundary conditions.
    We further write $N = M_\Omega Q_\Omega + (M - M_\Omega) ( Q - Q_\Omega)$ and define
    \begin{equation*}
      \bphi = \big( \bphi^{(1)}, \dots, \bphi^{(Q)} \big) \in (\mU^\ast)^{\otimes N}. 
    \end{equation*}
    Henceforth we write $\phi_n$ for $n=1, \dots, N$ to denote the 
    entries of the vector $\bphi$ and write $[ \bphi, u ] = ( [ \phi_1, u], \dots, [\phi_N, u]) \in \R^N$.
    With this notation we rewrite problem \eqref{parametric-PDE-kernel-opt} as 
    \begin{equation*}
    \left\{
      \begin{aligned}
        \minimize  \quad & \| u \|_\mU \\
        \st \quad  & F( [ \bphi, u  ] ) = \by,
      \end{aligned}
      \right.
    \end{equation*}
    where the data vector $\by \in \R^M$ has entries
    \begin{equation*}
      y_m := \left\{
        \begin{aligned}
         & f(\bs_m),  && \text{if}  && 1 \le m \le M_\Omega, \\
         & g(\bs_m), && \text{if} && M_\Omega + 1 \le m \le M,
        \end{aligned}
        \right.
      \end{equation*}
    and  $F: \R^N \to \R^M$ is a nonlinear map whose output components are defined as
    \begin{equation}
        \label{F-definition}
        \big( F([ \bphi, u]) \big)_m := \left\{ 
        \begin{aligned}
         & P \left( [ \phi^{1}_{m}, u ], \dots, [\phi^{Q_\Omega}_{m}, u] \right) && \text{if } 1 \le m \le M_\Omega, \\
         & B \left( [ \phi^{Q_\Omega + 1}_{m}, u ], \dots, [\phi^{Q}_{m}, u]  \right) && \text{if } M_\Omega+ 1 \le m \le M.
        \end{aligned}
        \right.
    \end{equation}
    Further  define
    the kernel vector field
    \begin{equation}\label{def-kernel-vector-field}
      K(\cdot, \bphi): \overline{\Upsilon} \to \mU^N, \qquad
      K(\bs, \bphi)_k :=  [ \phi_k, K(\bs, \cdot) ]
    \end{equation}
    and the kernel matrix
    \begin{equation}\label{def-kernel-matrix}
      K(\bphi, \bphi) \in \R^{N \times N}, \qquad
      K(\bphi, \bphi)_{nk} :=
      [ \phi_n, K(\cdot, \bphi)_k ].
    \end{equation}
      We can then characterize the minimizers of \eqref{parametric-PDE-kernel-opt}
      via the following representer theorem which is a
      direct consequence of \cite[Prop.~2.3]{CHEN2021110668}:
      \begin{proposition}\label{representer-theorem-parametric-PDE}
        Suppose \Cref{assumption:parametric-PDE} holds and
         $K(\bphi, \bphi)$ is invertible. Then a function
        $u^\dagger: \overline{\Upsilon} \to \R$
        is a minimizer of \eqref{parametric-PDE-kernel-opt} if and only if
        \begin{equation}\label{representer-formula}
          u^\dagger(\bs) = K(\bs, \bphi) K(\bphi, \bphi)^{-1} \bz^\dagger,
        \end{equation}
        where $\bz^\dagger \in \R^N$ solves
        \begin{equation}\label{representer-opt-prob}
          \left\{
          \begin{aligned}
          \minimize_{\bz \in \R^N} \quad  & \bz^T K(\bphi, \bphi)^{-1} \bz, \\
           \st \quad & F(\bz) = \by,
         \end{aligned}
         \right.
        \end{equation}
        with the nonlinear map $F$ defined in \eqref{F-definition}.
      \end{proposition}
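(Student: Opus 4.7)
The plan is to reduce the statement directly to the abstract (non-parametric) representer theorem of \cite[Prop.~2.3]{CHEN2021110668}, treating $\bs = (\bx, \btheta)$ merely as a generic point of the product domain $\overline{\Upsilon}$. Indeed, under \Cref{assumption:parametric-PDE} and the definitions of $\bphi$ and $F$ in \eqref{phi-def}--\eqref{F-definition}, the optimization \eqref{parametric-PDE-kernel-opt} has already been rewritten in the abstract form
\begin{equation*}
\minimize_{u \in \mU} \ \| u \|_\mU \quad \st \quad F([\bphi, u]) = \by,
\end{equation*}
where $\bphi \in (\mU^\ast)^N$ is a vector of bounded linear functionals on $\mU$ and $F : \R^N \to \R^M$ is a (possibly nonlinear) map. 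This is precisely the hypothesis of the earlier representer theorem, so the remaining task is to check that its proof carries over verbatim to the parametric setting. The only non-trivial item is boundedness of each $\phi_{m}^{q} = \delta_{\bs_m} \circ L_q$ on $\mU$: this is immediate from \Cref{assumption:parametric-PDE}, which provides boundedness of $L_q$ into $C(\Upsilon)$ or $C(\partial \Upsilon)$, together with continuity of point evaluation, which holds because $\mU$ is the RKHS of a kernel on the full product domain $\overline{\Upsilon}$.

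For transparency I would also sketch the underlying two-stage decomposition that drives \cite[Prop.~2.3]{CHEN2021110668}. Introducing the auxiliary vector $\bz := [\bphi, u] \in \R^N$, one rewrites the problem as the nested optimization
\begin{equation*}
\minimize_{\bz \in \R^N} \ \Bigl( \min_{u \in \mU \,:\, [\bphi, u] = \bz} \| u \|_\mU^2 \Bigr) \quad \st \quad F(\bz) = \by.
\end{equation*}
The inner problem is the classical minimum-norm interpolation in an RKHS with bounded linear measurements $\phi_1, \dots, \phi_N$. Under the hypothesis that $K(\bphi,\bphi)$ is invertible (equivalently, the $\phi_n$'s are linearly independent in $\mU^\ast$), a standard orthogonal-decomposition argument relative to the span of the Riesz representers $K(\cdot, \bphi)_k$ shows that the inner minimizer is unique and given by $u(\cdot) = K(\cdot, \bphi) K(\bphi, \bphi)^{-1} \bz$, with optimal value $\bz^T K(\bphi, \bphi)^{-1} \bz$. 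Substituting this optimal value into the outer problem yields the reduced quadratic program \eqref{representer-opt-prob}.

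Finally, I would establish the ``if and only if'' by using the map $u \mapsto [\bphi, u] = \bz$, which gives a bijection between minimizers of \eqref{parametric-PDE-kernel-opt} and minimizers $\bz^\dagger$ of \eqref{representer-opt-prob}, with the inverse direction supplied explicitly by the representer formula \eqref{representer-formula}; one must also verify that any minimizer $u^\dagger$ necessarily lies in the span of $\{K(\cdot, \bphi)_k\}$, which again follows from the orthogonality argument above. The main (and only) obstacle is really the bookkeeping associated with the two distinct operator groups indexed by $q \le Q_\Omega$ (interior) and $q > Q_\Omega$ (boundary), but since these are merely re-indexed into a single concatenated vector $\bphi$, no new ideas are required beyond those already present in the non-parametric case.
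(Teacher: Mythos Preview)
Your proposal is correct and matches the paper's approach exactly: the paper does not give a separate proof but simply states that the result ``is a direct consequence of \cite[Prop.~2.3]{CHEN2021110668},'' which is precisely the reduction you carry out. Your additional sketch of the two-stage decomposition underlying that cited result is more detail than the paper provides, but it is accurate and entirely in keeping with the intended argument.
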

      
This result allows us to reduce the infinite-dimensional optimization problem 
\eqref{parametric-PDE-kernel-opt} to a finite-dimensional optimization problem without incurring 
any approximation errors; it is an instance of the well-known family of representer theorems \cite[Sec.~4.2]{scholkopf2018learning}. Thus, to find an approximation to $u^\star$ we simply need to solve \eqref{representer-opt-prob} and apply the formula \eqref{representer-formula}; algorithms 
for this task are discussed next.

\subsection{Numerical Strategies}\label{subsec:numerical-strategies}
We now summarize various numerical strategies for solution of \eqref{representer-opt-prob}. These strategies 
are naturally applicable to non-parametric PDEs  as they can 
be viewed as a special case of \eqref{abstract-parametric-PDE} with a fixed parameter. 
In \Cref{subsec:Gauss-Newton} we summarize a Gauss-Newton algorithm 
that was introduced in \cite{CHEN2021110668} followed by a new and, often, more 
efficient strategy that linearizes the PDE first before formulating 
the optimization problem in \Cref{subsec:linearize-then-optimize}. 

\subsubsection{Gauss-Newton}\label{subsec:Gauss-Newton}
To solve the optimization problem \eqref{representer-opt-prob}, a Gauss-Newton algorithm was proposed in \cite{CHEN2021110668} which we recall briefly. 
The equality constraints can be dealt with either by elimination or relaxation.
Suppose that there exists a map $\hF: \R^{N - M} \times \R^M \to \R^N$ so that
 \begin{equation*}
   F (\bz) = \by \quad  \text{ if and only if } \quad
   \bz = \hF(\bw, \by), \quad \text{ for a unique } \bw \in \R^{N - M}\, .
\end{equation*}
Then, we rewrite \eqref{representer-opt-prob} as the unconstrained optimization 
problem
\begin{equation}\label{PDE-unconstrained-representer-theorem}
  \minimize_{\bw \in \R^{N - M}} \quad  \hF(\bw, \by)^T K(\bphi, \bphi)^{-1} \hF(\bw, \by)\, .
\end{equation}
Then  a minimizer $\bw^\dagger$ of  \eqref{PDE-unconstrained-representer-theorem}
can be approximated with a sequence of elements 
$\bw^{\ell}$ defined iteratively via
  $\bw^{\ell + 1} = \bw^{\ell} + \alpha^\ell  \delta \bw^{\ell},$
where $\alpha^\ell > 0 $ is an appropriate step size while
$\delta \bw^{\ell}$ is the minimizer of the optimization problem
\begin{equation*}\label{GN-alg-optimization-unconstrained}
  \minimize_{ \delta \bw \in \R^{N - M}} \quad
  \left( \hF(\bw^\ell, \by) +    \nabla_\bw  \hF(\bw^\ell, \by) \delta\bw \right) ^T
  K(\bphi, \bphi)^{-1} \left( \hF(\bw^\ell, \by) 
  +    \nabla_\bw  \hF(\bw^\ell, \by) \delta\bw \right)\,.
\end{equation*}

Alternatively, if the map $\hF$ does not exist or is hard to compute, i.e.,
eliminating the constraints is not feasible, then we consider the relaxed 
problem 
\begin{equation*}
    \minimize_{\bz \in \R^N} \quad
    \frac{1}{2}\bz^T K(\bphi, \bphi)^{-1} \bz + \frac{1}{2\beta^2} | F(\bz) - \by |^2, 
\end{equation*}
for a sufficiently small parameter $\beta>0$. Here $\|\cdot\|$ is the $L^2$ norm of the vector. A minimizer $\bz^\dagger_\beta$
of the above problem can be approximated with a sequence $\bz^\ell$ 
where $\bz^{\ell+1} = \bz^\ell + \alpha^\ell \delta \bz^\ell$ 
where $\delta \bz^\ell$ is the minimizer of 
\begin{equation*}
    \minimize_{\delta \bz \in \R^N} \quad
    \delta \bz^T K(\bphi, \bphi)^{-1}  \bz^\ell 
    + \frac{1}{2\beta^2} | F(\bz^\ell) +  \nabla F(\bz^\ell) \delta \bz - \by |^2.
\end{equation*}

\subsubsection{Linearize then Optimize}\label{subsec:linearize-then-optimize}
The Gauss-Newton approach of \Cref{subsec:Gauss-Newton} is 
applicable to  wide families of nonlinear PDEs. The primary 
computational bottleneck of that approach is the construction and 
factorization of the kernel matrix $K(\bphi, \bphi)$ which for some PDEs can be prohibitively large. To get around this 
difficulty we propose an alternative approach to approximating 
the solution of \eqref{parametric-PDE-kernel-opt} by first linearizing the 
PDE operators before applying \Cref{representer-theorem-parametric-PDE}. The resulting 
approach is more intrusive in comparison to the Gauss-Newton method 
as it requires explicit calculations involving the PDE but often 
leads to smaller kernel matrices and better performance. This method can 
also be viewed as applying the methodology of \cite{CHEN2021110668,giesl2007meshless} to discretize 
successive linearizations of the PDE. 

Let $u^\dagger$ denote the minimizer of \eqref{parametric-PDE-kernel-opt} 
as before. Assuming that the operators $\mP$ and $\mB$ are 
Fr\'echet differentiable we then approximate $u^\dagger$ with a sequence of elements 
$u^\ell$ obtained by solving the problem 
 \begin{equation}\label{linearized-parametric-PDE-opt}
    \left\{
      \begin{aligned}
        \minimize_{u \in \mU} \quad & \| u \|_\mU \\
        \st \quad  & \left(\mP(u^{\ell-1}) + \mP'(u^{\ell-1})(u-u^{\ell-1})\right)|_{\bs_m} = f(\bs_m), && m =1, \dots, M_\Omega, \\
            & \left( \mB(u^{\ell-1}) + \mB'(u^{\ell-1})(u-u^{\ell-1})\right)|_{\bs_m} = g( \bs_m), && m=M_\Omega+1, \dots, M,
      \end{aligned}
      \right.
    \end{equation}
where, $\mP'$ and $\mB'$ are the Fr\'echet derivatives of 
$\mP$ and $\mB$. 

Let us further suppose that \Cref{assumption:parametric-PDE}
holds. Observing that 
the constraints in \eqref{linearized-parametric-PDE-opt} are linear in $u$
we obtain an explicit formula for $u^\ell$ by
\cite[Prop.~2.2]{CHEN2021110668}:
\begin{equation}
u^{\ell}(\bs) = K(\bs,\tilde{\bphi}^{\ell-1})K(\tilde{\bphi}^{\ell-1},\tilde{\bphi}^{\ell -1})^{-1} \bz^{\ell-1}
\end{equation}
where $\bz^{\ell-1} = ( z_1^{\ell -1}, \dots, z^{\ell-1}_M)^T$ has entries
\begin{equation*}
 z^{\ell-1}_m = \left\{ 
    \begin{aligned}
    &\left(f-\mP(u^{\ell-1})+\mP'(u^{\ell-1})u^{\ell-1}\right)|_{\bs_m}, 
    && \text{if } 1 \le m \le M_\Omega, \\ 
        &\left(f-\mP(u^{\ell-1})+\mB'(u^{\ell-1})u^{\ell-1}\right)|_{\bs_m}, 
    && \text{if } M_\Omega+1 \le m \le M.
    \end{aligned}
 \right.
\end{equation*}
The vectors $\tilde{\bphi}^{\ell-1} \in ( \mU^\ast) ^{\otimes M}$ are obtained by concatenating the dual elements 
\begin{equation*}
    \tilde{\phi}_{m}^{\ell-1} := \left\{
    \begin{aligned}
    & \delta_{(\bs_m)} \circ \mP'(u^{\ell-1}), && \text{if } m = 1, \dots, M_\Omega, \\ 
    & \delta_{(\bs_m)} \circ \mB'(u^{\ell-1}), && \text{if } m = M_\Omega+1, \dots, M. \\ 
    \end{aligned}
    \right.
\end{equation*}
We note that the above scheme implicitly assumes that $\mU$ is sufficiently regular so that the derivatives
$\mP'(u^{\ell-1})$ and $\mB'(u^{\ell-1})$ can be  regarded as linear operators  mapping $\mU$ to $C(\overline{\Upsilon})$
where pointwise evaluation is well-defined. The most important feature of the linearize-then-optimize approach 
is that the kernel matrices $K(\tilde{\bphi}^{\ell-1},\tilde{\bphi}^{\ell -1})$ are of size $M \times M$
while the kernel matrix $K(\bphi, \bphi)$, used in the Gauss-Newton approach of \Cref{subsec:Gauss-Newton}, is 
of size $N \times N$. Thus, the linearize-then-optimize approach requires the inversion of much smaller kernel matrices at 
each iteration but these matrices need to be updated successively since the $\tilde{\bphi}^\ell$ depend on the 
 previous solution $u^{\ell -1}$.
In the case where \Cref{assumption:parametric-PDE} holds and $P, B$ are differentiable, then the $\mP'(u)$ and $\mB'(u)$ operators 
can be written explicitly as 
\begin{equation*}
\begin{aligned}
    \mP'(u^{\ell-1}): \: & u \mapsto \nabla P \left( L_1(u^{\ell-1}) , \dots, L_{Q_\Omega}(u^{\ell-1}) \right)^T 
    \begin{bmatrix}
    L_1(u) \\ 
    \vdots \\
    L_{Q_\Omega}(u)
    \end{bmatrix},  \\
        \mB'(u^{\ell-1}): \: & u \mapsto \nabla B \left( L_{Q_\Omega +1}(u^{\ell-1}) , \dots, L_{Q}(u^{\ell-1}) \right)^T 
    \begin{bmatrix}
    L_{Q_\Omega + 1}(u)\\ 
    \vdots \\
    L_{Q}(u)
    \end{bmatrix}.
\end{aligned}
\end{equation*}
\begin{remark}
    In \cite[Sec. 5.1]{chen2023sparse}, it is shown that the Gauss-Newton iteration and the ``linearize then optimize'' approach are mathematically equivalent for some PDEs. Specifically, they are both equivalent to a sequential quadratic programming approach to solving \eqref{representer-opt-prob}.
\end{remark}

\section{Error Analyses}\label{sec:error-bounds}
We now present our main theoretical results concerning convergence rates for the 
minimizers $u^\dagger$ of  \eqref{parametric-PDE-kernel-opt} to the respective true solutions $u^\star$. 
We start in \Cref{subsec:abstract-error-analysis} by articulating
the abstract framework, main theorem and proof. We then consider the simple setting of a nonlinear PDE 
in \Cref{subsec:PDE-error-bounds} where the RKHS 
$\mU$ already satisfies the boundary conditions of the PDE to convey the main ideas of the 
proof in a simple setting.  
Non-trivial boundary conditions are then considered in \Cref{subsec:PDE-error-bounds-with-BC}
followed by the case of parametric PDEs  in \Cref{subsec:Parametric-PDE-error-bounds}.
Our proof technique is a generalization of the results of \cite{giesl2007meshless, schaback2016all} to the case of nonlinear and parametric PDEs that are Lipschitz stable and well-posed.

\subsection{An Abstract Framework for Obtaining Convergence Rates}\label{subsec:abstract-error-analysis}
We present here an abstract theoretical result that allows us to obtain convergence rates for 
nonlinear operator equations. Our error analyses concerning the  numerical solutions $u^\dagger$
and the true solution to the PDE $u^\star$ then follow as applications of this abstract result. 
Our main result here can also be viewed as a generalization of the results of 
\cite[Sec. 10]{schaback2016all}, which focused on linear operators, to the nonlinear case. 

Let us consider operator equations of the form 
\begin{equation}\label{abstract-nonlinear-operator-equation}
    \mT(v^\star) = w^\star,
\end{equation}
where $v^\star, w^\star$ are elements of  appropriate Banach spaces and $\mT$ is 
a nonlinear map. 
In the setting of PDEs the map $\mT$ is defined by the differential operator of the PDE, 
$v^\star$ coincides with the solution and $w^\star$ is the source/boundary data. 
Broadly speaking our goal is to approximate the solution $v^\star$ under assumptions on 
its regularity and the stability properties of the map $\mT$.
To this end, we present a general result that allows us to control the error of 
approximating $v^\star$ given an appropriate candidate $v^\dagger$.

\begin{theorem}\label{thm:abstract-error-analysis}
Assume problem \eqref{abstract-nonlinear-operator-equation} is uniquely solvable  
and consider abstract Banach spaces $(V_i, \| \cdot \|_i)_{i=1}^4$ as well as 
$(\mU, \| \cdot  \|_\mU)$. 
Let $v^\star, v^\dagger \in \mU$ and suppose 
the following conditions are satisfied for any choice of $r >0$ (all the appeared constants $C(r)$ are non-decreasing regarding $r$): 
\begin{enumerate}[label=(A\arabic*)]

\item For any pair $v, v' \in B_r(V_1)$ 
there exists a constant $C = C(r) > 0$ so that
    \begin{align}
            \| v - v' \|_1 & \le C \| \mT(v) - \mT(v') \|_2.
            \label{abs-inverse-stability} 
    \end{align}
    \label{cond:A1}

\item There exists a constant $\veps>0$, independent of $v^\dagger$ and $v^\star$ so that 
    \begin{align}
        \| \mT(v^\dagger) - \mT(v^\star) \|_2 \le \veps \| \mT(v^\dagger) - \mT(v^\star) \|_3.  
            \label{abs-cond-2} 
    \end{align}
    \label{cond:A2}
\item For any pair $v, v' \in B_r(V_4)$ there exists a constant $C = C(r) >0$ so that 
\begin{align}
            \| \mT(v) - \mT(v') \|_3 & \le C \| v - v' \|_4.
            \label{abs-forward-stability} 
    \end{align}
    \label{cond:A3}

 \item For any $v \in V_4$,  there exists a constant $C>0$ so that 
     \begin{equation*}
     \|v\|_4\leq C \|v\|_\mU.    
     \end{equation*}
     \label{cond:A4}

\item There exists a constant $C > 0$, independent of $v^\star$ and $v^\dagger$ so that 
\begin{align}
\| v^\dagger \|_\mU & \le C \| v^\star \|_\mU.     \label{abs-cond-1}    
\end{align}
\label{cond:A5}

\end{enumerate}
Then there exists a constant  $C$ that depends on $\| v^\star \|_\mU$ such that 
\begin{equation*}
    \| v^\dagger - v^\star \|_1 \le C \veps \| v^\star \|_\mU. 
\end{equation*}
\end{theorem}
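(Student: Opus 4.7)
The plan is to chain the five assumptions in a pipeline that telescopes the target quantity $\|v^\dagger - v^\star\|_1$ through the images under $\mT$ and back, picking up the factor $\veps$ along the way. Concretely, I expect the inequalities to stack in the order \ref{cond:A1} $\to$ \ref{cond:A2} $\to$ \ref{cond:A3} $\to$ \ref{cond:A4} $\to$ \ref{cond:A5}: the inverse stability in $V_1$ converts the error into an image-space error in $V_2$; the hypothesis \ref{cond:A2} trades the $V_2$ norm for $\veps$ times the $V_3$ norm; the forward stability \ref{cond:A3} pulls that back to a $V_4$ error of the arguments themselves; and then \ref{cond:A4} together with \ref{cond:A5} bounds this ultimately by $\|v^\star\|_\mU$.

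Before invoking \ref{cond:A1} and \ref{cond:A3}, I would first establish a radius $r$ for which both $v^\star$ and $v^\dagger$ lie in $B_r(V_4)$ (and implicitly in $B_r(V_1)$, using that $\mU$ embeds continuously in each of these spaces). From \ref{cond:A5} and the triangle inequality,
\[
\|v^\dagger - v^\star\|_\mU \le \|v^\dagger\|_\mU + \|v^\star\|_\mU \le (C+1)\|v^\star\|_\mU,
\]
and then \ref{cond:A4} gives $\|v^\dagger - v^\star\|_4 \le C'\|v^\star\|_\mU$, together with $\|v^\star\|_4, \|v^\dagger\|_4 \le C''\|v^\star\|_\mU$ individually. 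Choosing $r$ proportional to $\|v^\star\|_\mU$ thus accommodates both $v^\star$ and $v^\dagger$ in $B_r(V_1) \cap B_r(V_4)$; the constants $C(r)$ from \ref{cond:A1} and \ref{cond:A3}, being non-decreasing in $r$, become constants that depend only on $\|v^\star\|_\mU$.

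With this in hand, the main chain is straightforward: applying \ref{cond:A1} to the pair $(v^\dagger, v^\star)$, then \ref{cond:A2}, then \ref{cond:A3}, and finally \ref{cond:A4} gives
\[
\|v^\dagger - v^\star\|_1 \le C(r)\|\mT(v^\dagger) - \mT(v^\star)\|_2 \le C(r)\veps\|\mT(v^\dagger) - \mT(v^\star)\|_3 \le C(r)\veps\|v^\dagger - v^\star\|_4 \le C(r)\veps\|v^\dagger - v^\star\|_\mU,
\]
and one more triangle inequality with \ref{cond:A5} absorbs $\|v^\dagger - v^\star\|_\mU$ into a multiple of $\|v^\star\|_\mU$, producing the advertised estimate $\|v^\dagger - v^\star\|_1 \le C\veps\|v^\star\|_\mU$ with $C$ depending on $\|v^\star\|_\mU$.

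The only genuinely nontrivial point, which I expect to be the main obstacle in a formal write-up, is the radius-tracking step: the inequalities in \ref{cond:A1} and \ref{cond:A3} are local (valid on $B_r$), so one must verify a priori that both $v^\star$ and $v^\dagger$ lie in such a ball with a radius controlled solely by $\|v^\star\|_\mU$. Everything else is algebraic manipulation of the chain of norm inequalities; the role of assumption \ref{cond:A5} is precisely to ensure that the numerical surrogate $v^\dagger$ does not blow up in $\mU$ and therefore keeps the whole argument within the regime where the local stability constants in \ref{cond:A1} and \ref{cond:A3} remain bounded.
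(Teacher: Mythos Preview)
Your proposal is correct and follows essentially the same pipeline as the paper's proof, chaining \ref{cond:A1} through \ref{cond:A5} in order. The only cosmetic difference is that where you apply \ref{cond:A3} directly to the pair $(v^\dagger,v^\star)$, the paper inserts a triangle inequality through $\mT(0)$ and applies \ref{cond:A3} to $(v^\dagger,0)$ and $(v^\star,0)$ separately; your route is marginally more direct, and your explicit radius-tracking step is in fact more careful than the paper's own presentation.
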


\begin{proof}
The conditions of the theorem are presented in the order of the logical steps of the proof.
By \ref{cond:A1} we have that 
\begin{equation}\label{abstract-proof-disp}
    \| v^\dagger - v^\star \|_1 \le C \| \mT(v^\dagger) - \mT(v^\star) \|_2.
\end{equation}
Then 
 \ref{cond:A2}  implies that  
    $\| \mT(v^\dagger) - \mT(v^\star) \|_2 \le  C \veps \| \mT(v^\dagger) - \mT(v^\star) \|_3$. By the triangle inequality 
    we  have $ \| \mT(v^\dagger) - \mT(v^\star)  \|_3 \le  \| \mT(v^\dagger) - \mT(0) \|_3 
    + \| \mT(v^\star) - \mT(0) \|_3$.
    Using \ref{cond:A3}, \ref{cond:A4}, 
    and \ref{cond:A5} in that order, we get $\| \mT(v^\dagger) - \mT(0) \|_3
        \le C \| v^\dagger \|_4  
        \le C \| v^\dagger \|_\mU
        \le C \| v^\star \|_\mU.$
     Similarly, we have $\| \mT(v^\star) - \mT(0) \|_3 \le C \| v^\star \|_\mU$. 
    Combining these bounds we obtain  $ \| \mT(v^\dagger) - \mT(v^\star) \|_3 \le C \veps \| v^\star \|_\mU$ which yields the desired result due to \eqref{abstract-proof-disp}.
    {}
\end{proof}


Let us provide some remarks regarding the assumptions of the theorem. In our PDE examples 
we often take the $V_i$ spaces to be Sobolev spaces of appropriate smoothness while $\mU$ 
is taken as an RKHS that is sufficiently smooth and so  $v^\star \in \mU$ 
amounts to an assumption on the regularity of the true solution to the problem. 
Conditions \ref{cond:A1}
and \ref{cond:A3} amount to forward and inverse Lipschitz stability of the operator $\mT$
while \ref{cond:A2} is often given by a sampling/Poincar{\'e}-type inequality for our numerical 
method. 
We treat the constant $\veps$ separately from the other constants in the theorem since in practice 
$\veps$ often coincides with some power of the resolution (fill-distance/meshnorm) of our numerical scheme, 
constituting the rate of convergence of the method. 
Assumption \ref{cond:A4} also concerns the regularity of the RKHS and the choice of 
the space $V_4$ (we simply ask for $\mU$ to be continuously embedded in $V_4$) and is a 
matter of the setup of the problem. Condition \ref{cond:A5} is less natural as it 
requires the norm of the approximate solution $v^\dagger$ to be controlled by the norm of $v^\star$. 
While this condition does not hold for many numerical approximation schemes, we will see that it 
follows easily from the setup of our collocation/optimal recovery scheme.

  In plain words, the most important message of \Cref{thm:abstract-error-analysis} is that: 
  {\it given Condition \eqref{abs-cond-1} and the Lipschitz-continuity of $\mT$ and its inverse, it 
  follows that the approximation error between $v^\dagger$ and $v^\star$ is bounded by the 
  approximation error between  $\mT(v^\dagger)$ and $\mT(v^\star)$}. This result can be applied to 
 both GP/kernel  and ANN based collocation methods, since both seek to minimize the error between 
 $\mT(v^\dagger)$ and $\mT(v^\star)$ at collocation points. This Condition 
 \eqref{abs-cond-1} is automatically satisfied for our GP/Kernel based methods that solve problems of 
 the form 
\begin{equation*}
    v^\dagger = \argmin \| v \|_\mU 
    \qquad \text{s.t.} \qquad  
    [\phi_i, \mT(v)]  = [ \phi_i, \mT(v^\star)], \quad i =1, \dots, M,
\end{equation*}
with $\mT$ denoting the differential operator of a PDE and $\phi_i$ denoting a 
set of dual elements (e.g. pointwise evaluations at collocation points). Then since the true solution $v^\star$ satisfies  the PDE for an infinite collection of dual elements (e.g. pointwise within a set, or in a weak sense) then we immediately have that $\| v^\dagger \|_\mU \le \| v^\star \|_\mU$. One can also take $\mU$ to be a Barron space
(indeed the $V_i$ norms could be arbitrary)
to obtain an analogous result 
for ANNs, but it is unclear if this setup coincides with (or leads to) any practical algorithms.

\subsection{The Case of Second Order Nonlinear PDEs}\label{subsec:PDE-error-bounds}
We begin our error analysis in the case where \eqref{abstract-parametric-PDE} 
does not depend on the parameter $\btheta$ and homogeneous Dirichlet boundary 
conditions are imposed, i.e., nonlinear second order  PDEs of the form 
\begin{equation}
  \label{abstract-PDE}
  \left\{
    \begin{aligned}
      \mP(u^\star) ( \bx) & = f( \bx), && \bx \in \Omega,\\
      u^\star (\bx) & = 0, && \bx \in \partial\Omega.
    \end{aligned}
    \right.
  \end{equation}
  The choice of Dirichlet boundary conditions is only made for simplicity here and 
  can be replaced with other conditions of interest. We will also consider approximate 
  boundary conditions in \Cref{subsec:PDE-error-bounds-with-BC}. 
 We  further assume that 
 the kernel $K$ is chosen so that the elements of $\mU$ 
readily satisfy the boundary conditions of the PDE and consider optimization problems of the form 
  \begin{equation}
    \label{PDE-kernel-opt}
    \left\{
      \begin{aligned}
        \minimize_{u \in \mU}  \quad & \| u \|_\mU \\
        \st \quad  & \mP(u)(\bx_m) = f(\bx_m), && m =1, \dots, M_\Omega, \\ 
        & u(\bx) = 0, && \bx \in \partial \Omega.
      \end{aligned}
      \right.
    \end{equation}
    where $ X_\Omega := \{ \bx_m \}_{m=1}^{M_\Omega} \subset \Omega$ are a set of collocation points. 
We need  to impose appropriate assumptions on the RKHS $\mU$,  the domain $\Omega$ and the PDE operator $\mP$.

\begin{assumption}\label{assumption:PDE-regularity}
The following conditions hold: 
\begin{enumerate}[label=(B\arabic*)]
    \item {\it (Regularity of the domain)} $\Omega \subset \R^d$ is a compact set with a Lipschitz boundary.
    \label{assumption-PB-i}
    \item 
    {\it (Stability of $\mP$)}   
    There exist indices  $\gamma > 0$ and $k \in \mathbb{N}$ satisfying 
    $ d/2 < k + \gamma$ and $s\geq 1, \ell \in \R$, so that  for any $r > 0$
    it holds that
    \begin{align}
        \| u_1 - u_2 \|_{H^{\ell}(\Omega)}  &\le C \| \mP(u_1) - \mP(u_2) \|_{H^{k}(\Omega)}, 
        && \forall u_1, u_2 \in B_r(H^\ell(\Omega) \cap H^1_0(\Omega))
        \label{eq:PDE-stability-estimate-backward} \\
        \| \mP(u_1) - \mP(u_2) \|_{H^{k+ \gamma}(\Omega)} &\le C \| u_1 - u_2 \|_{H^{s}(\Omega)}, 
        && \forall u_1, u_2 \in B_r(H^s(\Omega) \cap H^1_0(\Omega))
        \label{eq:PDE-stability-estimate-forward}  
    \end{align}
    where 
    $C = C(r) >0$ is independent of the $u_i$'s. The space  $H^s(\Omega) \cap H^1_0(\Omega)$ can be equipped with the norm $\|\cdot\|_{H^s(\Omega)}$, which is used to define the balls above. \label{assumption-PB-ii}
    \item $\mU$ is continuously embedded in $H^s(\Omega) \cap H^1_0(\Omega)$.
    \label{assumption-PB-iii}
\end{enumerate}
\end{assumption}

\Cref{assumption-PB-i} is standard while 
\ref{assumption-PB-iii} dictates the choice of the RKHS $\mU$, and in turn the kernel, 
 which should be made based on a priori knowledge about regularity of the 
strong solution $u^\star$. We highlight that, 
asking elements of $\mU$ to satisfy the boundary conditions 
is only practical for simple domains and boundary conditions such as periodic, Dirichlet, or 
Neumann conditions  on  hypercubes or spheres.
Assumption \ref{assumption-PB-ii}  on the other hand is a question in the analysis of nonlinear PDEs
and is independent of our numerical scheme; simply put we require the PDE to be 
Lipschitz well-posed with respect to the right hand side/source term.

We are now ready to present our first theoretical result characterizing the convergence of 
the minimizer $u^\dagger$ of \eqref{PDE-kernel-opt} 
to $u^\star$ the strong solution of \eqref{abstract-PDE}. 

\begin{theorem}\label{thm:PDE-error-bound-no-BC}
Suppose \Cref{assumption:PDE-regularity}
is satisfied and let $u^\star \in \mU$ denote the unique strong solution of \eqref{abstract-PDE}.
Let $u^\dagger$ be a minimizer of \eqref{PDE-kernel-opt} 
with a set of collocation points $X_\Omega \subset \Omega$
and define their fill-distance
 $$   h_\Omega : =  \sup_{\bx' \in \Omega} \inf_{\bx \in X_\Omega} | \bx' - \bx |.$$
Then there exists a constant $h_0 >0$ so that if $h_\Omega < h_0$ then 
\begin{equation*}
    \| u^\dagger - u^\star \|_{H^{\ell}(\Omega)} \le C h_\Omega^{\gamma} \| u^\star \|_\mU,
\end{equation*}
where the constant $C> 0$ is independent of $u^\dagger$, and $h_\Omega$.
\end{theorem}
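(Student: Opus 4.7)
The plan is to apply the abstract \Cref{thm:abstract-error-analysis} with the identifications $\mT = \mP$, $v^\star = u^\star$, $v^\dagger = u^\dagger$, and the Banach spaces
\begin{equation*}
V_1 = H^\ell(\Omega), \quad V_2 = H^k(\Omega), \quad V_3 = H^{k+\gamma}(\Omega), \quad V_4 = H^s(\Omega),
\end{equation*}
with $\mU$ as in \ref{assumption-PB-iii}. Conditions \ref{cond:A1} and \ref{cond:A3} are then exactly the two stability bounds \eqref{eq:PDE-stability-estimate-backward} and \eqref{eq:PDE-stability-estimate-forward} of \ref{assumption-PB-ii}, and \ref{cond:A4} is the continuous embedding $\mU \hookrightarrow H^s(\Omega)\cap H^1_0(\Omega)$ asserted by \ref{assumption-PB-iii}. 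Condition \ref{cond:A5} is the standard minimum-norm property: because $u^\star \in \mU$ and $u^\star$ satisfies the strong form of \eqref{abstract-PDE} (hence every pointwise constraint in \eqref{PDE-kernel-opt}), $u^\star$ is feasible for \eqref{PDE-kernel-opt}, and therefore the minimizer $u^\dagger$ satisfies $\|u^\dagger\|_\mU \le \|u^\star\|_\mU$.

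The crux of the argument, and what produces the rate $h_\Omega^\gamma$, is \ref{cond:A2}. The key observation is that, by feasibility of $u^\dagger$ and the fact that $u^\star$ solves the PDE exactly, the residual $\mP(u^\dagger) - \mP(u^\star)$ vanishes on every collocation point in $X_\Omega$. Since $k+\gamma > d/2$ and $\mP(u^\dagger) - \mP(u^\star) \in H^{k+\gamma}(\Omega)$ by \eqref{eq:PDE-stability-estimate-forward}, a standard sampling (``zeros lemma'') inequality of Narcowich--Ward--Wendland type yields, for $h_\Omega$ below some threshold $h_0$,
\begin{equation*}
\|\mP(u^\dagger) - \mP(u^\star)\|_{H^k(\Omega)} \le C\, h_\Omega^{\gamma}\, \|\mP(u^\dagger) - \mP(u^\star)\|_{H^{k+\gamma}(\Omega)}.
\end{equation*}
This provides \ref{cond:A2} with $\veps = C h_\Omega^{\gamma}$. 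I expect the threshold $h_0$ and any smoothness-of-$\partial\Omega$ requirements for the sampling inequality to follow from \ref{assumption-PB-i}; this is the main technical step, and I would cite a result such as Wendland's sampling inequality (or the variant in the appendix of the paper) rather than reprove it.

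With \ref{cond:A1}--\ref{cond:A5} verified, \Cref{thm:abstract-error-analysis} yields
\begin{equation*}
\|u^\dagger - u^\star\|_{H^\ell(\Omega)} \le C\, \veps\, \|u^\star\|_\mU = C\, h_\Omega^{\gamma}\, \|u^\star\|_\mU,
\end{equation*}
which is the stated bound. A minor bookkeeping point is that the constants $C(r)$ in \ref{cond:A1} and \ref{cond:A3} must be evaluated at a radius $r$ depending on $\|u^\star\|_\mU$ and $\|u^\dagger\|_\mU$; this is harmless because \ref{cond:A5} bounds the latter in terms of the former, so the radius is a fixed multiple of $\|u^\star\|_\mU$ (hence the final constant depends on $\|u^\star\|_\mU$, as the statement allows). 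The main obstacle, as noted, is obtaining the sampling inequality in \ref{cond:A2} with the correct exponent $\gamma$; everything else is an essentially mechanical application of the abstract framework.
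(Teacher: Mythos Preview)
Your proposal is correct and follows essentially the same approach as the paper: identical choices of $\mT$, $V_1$--$V_4$, the same verification of \ref{cond:A1}, \ref{cond:A3}, \ref{cond:A4} from \Cref{assumption:PDE-regularity}, \ref{cond:A5} from the minimum-norm property, and \ref{cond:A2} via the sampling inequality (the paper invokes \Cref{fuselier-sobolev-bound}) applied to the residual $\mP(u^\dagger)-\mP(u^\star)$ which vanishes on $X_\Omega$. Your bookkeeping remark about the radius $r$ is also consistent with how the paper handles the dependence of the final constant on $\|u^\star\|_\mU$.
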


\begin{proof}
We will obtain the result by applying \Cref{thm:abstract-error-analysis} with 
 the map $\mT \equiv \mP$ and  the spaces 
$V_1 \equiv H^\ell(\Omega)$, $V_2 \equiv H^k(\Omega)$, $V_3 \equiv H^{k+\gamma}(\Omega)$, 
and $V_4 \equiv H^s(\Omega)$. 
With this setup we proceed to verify the conditions of \Cref{thm:abstract-error-analysis}: 
Condition \ref{cond:A1} follows from \eqref{eq:PDE-stability-estimate-backward}, 
\ref{cond:A3} follows from \eqref{eq:PDE-stability-estimate-forward}, 
\ref{cond:A4} follows from \ref{assumption-PB-iii}. 

Condition \ref{cond:A5} 
holds since $u^\dagger$ is a minimizer 
of \eqref{PDE-kernel-opt} and so  $\| u^\dagger \|_\mU \le \| u^\star \|_\mU$, 
since $u^\star$ is feasible but satisfies additional constraints compared with $u^\dagger$, 
i.e., it solves the PDE over the entire set $\Omega$. Thus, \eqref{abs-cond-1} is verified
with constant $C=1$. 

It remains to verify \ref{cond:A2}:
Let $\bar{f} = \mP(u^\dagger) - \mP(u^\star)$  and observe that $\bar{f}(\bx) = 0$ for all $\bx \in X_\Omega$.  Thus $\bar{f} \in H^{k+\gamma}(\Omega)$ is zero on $X_\Omega$ and an application of \Cref{fuselier-sobolev-bound} yields 
the existence of a constant $h_0>0$ so that whenever $h_\Omega < h_0$ then
$ \| \bar{f} \|_{H^k(\Omega)} \le C h_\Omega^{\gamma} \| \bar{f} \|_{H^{k + \gamma}(\Omega)}. $
This verifies \eqref{abs-cond-2} with $\veps \equiv C h_\Omega^\gamma$.
\end{proof}

\begin{remark}\label{rem:local-stability}
We note that \Cref{assumption-PB-ii} and in turn \Cref{thm:PDE-error-bound-no-BC} can easily be modified to a local version where 
the stability estimates \eqref{eq:PDE-stability-estimate-forward} and \eqref{eq:PDE-stability-estimate-forward}
are stated for $u_1, u_2$ belonging 
to a ball of radius $r>0$ around the true solution $u^\star$. Then one can obtain 
an asymptotic rate for $\| u^\dagger -u^\star \|_{H^\ell(\Omega)}$ under the additional 
assumption that $u^\dagger$ is sufficiently close to $u^\star$.
\end{remark}

\begin{remark}
The assumptions and results of \Cref{assumption:PDE-regularity} are analogous to the one used to obtain error estimates in numerical homogenization for elliptic PDEs \cite{owhadi2019operator}. In particular \Cref{thm:PDE-error-bound-no-BC} can be extended to the setting where measurements on the PDE are not pointwise but involve integral operators and where the coefficients may be rough. 
\end{remark}

We now present  a brief example where 
\Cref{assumption:PDE-regularity} can be verified and so \Cref{thm:PDE-error-bound-no-BC} is applicable to obtain convergence rates for our GP/kernel collocation solver.

\begin{example}[Nonlinear Darcy flow continued]\label{ex:Darcy-2}
Let us consider the nonlinear Darcy flow PDE \eqref{eq:darcy-flow-nonlinear} and  assume 
that $\Omega$ has a smooth boundary and $a(\bx) \in C^\infty(\Omega)$ is fixed and  satisfies $a(\bx) \ge 1$.
Further suppose $\tau(z) = 1 + \tanh(\beta z)$ for a fixed constant $\beta >0$ 
to be determined. Now pick $k = \lceil d/2 + \alpha \rceil$ from which 
it follows that
$H^k(\Omega)$ is continuously embedded in $C^\alpha(\bar{\Omega})$ \cite[Thm.~7.26]{gilbarg1977elliptic}
and fix an integer $\gamma > 0$\footnote{We only assume the exponents are integers 
for simplicity but our arguments can be generalized to the case of non-integer indices}. It is then straightforward to verify \eqref{eq:PDE-stability-estimate-forward} with $s = k + \gamma + 2$
as well as \Cref{assumption:PDE-regularity}(iii) by choosing  
the kernel $K$ to be the Green's function 
of the operator $(-\Delta)^{s}$ on the domain $\Omega$, subject to homogeneous Dirichlet boundary 
conditions. We note that approximating this Green's function can be expensive in practice and in \Cref{subsec:PDE-error-bounds-with-BC} we propose a way around this step by using collocation points on the boundary of $\Omega$ to impose the boundary 
conditions. 

Furthermore our assumptions on $a$ and $\tau$ imply that  $\mP$ is uniformly elliptic
in $\bar{\Omega}$ (see \cite[Part~II]{gilbarg1977elliptic} 
for definition of ellipticity for nonlinear elliptic PDEs).
Since $\tau$ is smooth it follows from \cite[Thm.~13.8]{gilbarg1977elliptic} that, 
for any $\alpha \in (0,1)$ and $f \in C^\alpha(\bar{\Omega})$, the PDE 
     \begin{equation}\label{Darcy-PDE-example}
        \left\{
        \begin{aligned}
          -\Div \big( \exp(a ) \nabla u \big) + \tau(u )  & = f, && \bx \in \Omega, \\
          u & = 0, && \bx \in  \partial \Omega,
        \end{aligned}
        \right.
    \end{equation}
has a solution $u \in C^{2}(\bar{\Omega})$. Now pick $f_1, f_2 \in H^k(\Omega)$ 
which, by the aforementioned Sobolev embedding result, belong to $C^\alpha(\bar{\Omega})$.
Write $u_1, u_2 \in C^2(\bar{\Omega})$ for the solution of the PDE with both right hand sides and observe that 
the difference $w : = u_1 - u_2$ solves the PDE 
\begin{equation*}
    -\Div \big( \exp(a  ) \nabla w \big) = f_1 - f_2 + \tau(u_2) - \tau(u_1).
\end{equation*}
Standard stability results for linear elliptic PDEs then imply the bound
\begin{equation*}
     \| w\|_{H^2(\Omega)}  \le B \left( \| f_1 - f_2 \|_{L^2(\Omega)} + \| \tau(u_2) - \tau(u_1) \|_{L^2(\Omega)} \right), 
\end{equation*}
for a constant $B> 0$ independent of $w, f_1, f_2, u_1, u_2$. Since $\tau$ is globally $\beta$-Lipschitz
we infer that $\| \tau(u_1) - \tau(u_2) \|_{L^2(\Omega)} \le \beta \| w\|_{L^2(\Omega)}$ which, 
together with the subsequent bound, yields 
    \begin{equation*}
    \| w\|_{H^2(\Omega)}\le  \frac{B}{1-B\beta}  \| f_1 - f_2 \|_{L^2(\Omega)}.
     \end{equation*}
Thus, assumption \eqref{eq:PDE-stability-estimate-backward} is satisfied 
with $\ell = 2$ as long as $\beta B < 1$. 

\end{example}

\subsection{ Handling Boundary Conditions}\label{subsec:PDE-error-bounds-with-BC}
We now turn our attention to the case where \eqref{abstract-parametric-PDE} is still independent of 
the $\btheta$ parameter but involves non-trivial boundary conditions, i.e., 
\begin{equation}
  \label{abstract-PDE-with-BC}
  \left\{
    \begin{aligned}
      \mP(u^\star) ( \bx) & = f( \bx), && \bx \in \Omega,\\
      \mB(u^\star) (\bx) & = g(\bx), && \bx \in \partial\Omega.
    \end{aligned}
    \right.
  \end{equation}
We will further assume that the elements of $\mU$ do not satisfy the boundary conditions exactly and 
so boundary collocation points are utilized to approximate those conditions leading to the problem 
 \begin{equation}
    \label{PDE-kernel-opt-with-BC}
    \left\{
      \begin{aligned}
        \minimize_{u \in \mU}  \quad & \| u \|_\mU \\
        \st \quad  & \mP(u)(\bx_m) = f(\bx_m), && m =1, \dots, M_\Omega, \\ 
        & \mB(u)(\bx_m) = g(\bx_m), && m = M_\Omega+1, \dots, M.
      \end{aligned}
      \right.
    \end{equation}
    where $ X_\Omega := \{ \bx_m \}_{m=1}^{M_\Omega} \subset \Omega$ are the interior  collocation points as before 
    and $ X_{\partial \Omega} := \{ \bx_m \}_{m=M_\Omega+1}^M \subset \partial \Omega$ are the boundary collocation points.
We will state our assumptions and results for PDEs in $d>1$ dimensions since in the 
1D case we can, in principle, impose the boundary conditions exactly by placing some collocation points on boundary.  
The main difference, in comparison to 
 \Cref{thm:PDE-error-bound-no-BC}, is that here we need to impose 
new assumptions on the PDE operators $\mP$ and $\mB$ and the boundary of $\Omega$  to be able to 
use  \Cref{fuselier-sobolev-bound} (sampling inequality on manifolds) in the final step of the proof
to obtain approximation rates for the boundary data.

\begin{assumption}\label{assumption:PDE-regularity-with-BC}
The following conditions hold: 
\begin{enumerate}[label=(C\arabic*)]
    \item {\it (Regularity of the domain and its boundary)} $\Omega \subset \R^d$ with $d >1$
    is a compact set and $\partial \Omega$ is a smooth connected Riemannian manifold 
    of dimension $d-1$ endowed with a 
    geodesic distance $\rho_{\partial \Omega}$. 
    \label{assumption-PC-i}

    \item {\it (Stability of the PDE)}
    
    There exist $\gamma>0$ and 
    $k, t \in \mathbb{N}$ satisfying $d/2 < k+\gamma$ and $(d-1)/2 < t+\gamma$, and $s,\ell \in \R$,
    so that for any $r > 0$  it holds that
    \label{assumption-PC-ii}
        \begin{equation}\label{eq:PDE-stability-estimate-backward-with-BC}
    \begin{aligned}
    \| u_1 - u_2 \|_{H^\ell(\Omega)} \le
     & C  \big( \| \mP(u_1) - \mP(u_2) \|_{H^k(\Omega)} \\ 
     & + \| \mB(u_1) - \mB(u_2) \|_{H^t(\partial \Omega)}  \big) \quad  \forall u_1, u_2 \in B_r(H^\ell(\Omega)),
    \end{aligned}
    \end{equation}
    \begin{equation}\label{eq:PDE-stability-estimate-forward-with-BC}
    \begin{aligned}
    \| \mP(u_1) - \mP(u_2) \|_{H^{k+\gamma}(\Omega)} &
    + \| \mB(u_1) - \mB(u_2) \|_{H^{t + \gamma}(\partial \Omega)} \\ 
    & \le C \| u_1 - u_2 \|_{H^{s}(\Omega)}, \qquad \forall u_1, u_2 \in B_r(H^s(\Omega)),
    \end{aligned}
    \end{equation}
    where $C = C(r)>0$ is a constant independent of the $u_i$. 
    
    \item $\mU$ is continuously 
    embedded in $H^s(\Omega)$. \label{assumption-PC-iii}
    \end{enumerate}
\end{assumption}

Observe that the above assumptions are analogous to \Cref{assumption:PDE-regularity} 
with the exception that we no longer work with the restricted Sobolev spaces $H^k_0$ since 
we do not need to impose the boundary conditions. However, we need to state our stability results 
for both $\mP$ and $\mB$. We emphasize that the verification of condition \ref{assumption-PC-ii} remains a
question in the analysis of PDEs.
We are now ready to extend Theorem~\ref{thm:PDE-error-bound-no-BC}
to the case of non-trivial boundary conditions. 

\begin{theorem}\label{thm:PDE-error-bound-with-BC}
Suppose \Cref{assumption:PDE-regularity-with-BC} is satisfied and let 
 $u^\star \in \mU$ denote the unique strong  solution of \eqref{abstract-PDE}.
Let $u^\dagger$ be a minimizer of \eqref{PDE-kernel-opt} with a set of collocation points
$X \subset \overline{\Omega}$ where $X_\Omega \subset X$ denotes the collocation points 
in the interior of $\Omega$ and  $X_{\partial \Omega}$ denotes the collocation points 
on the boundary $\partial\Omega$. Define the fill-distances 
\begin{equation*}
    h_{\Omega} := \sup_{\bx' \in \Omega} \inf_{\bx \in X_\Omega} | \bx' - \bx|, 
    \qquad h_{\partial \Omega} := \sup_{\bx' \in \partial \Omega} \inf_{\bx \in X_{\partial \Omega}} 
    \rho_{\partial \Omega}(\bx', \bx),
\end{equation*}
where $\rho_{\partial \Omega}: \partial \Omega \times \partial \Omega \to \R_+$ is 
the geodesic distance defined on $\partial \Omega$ (see \Cref{sec:AppA}), and set 
 $\bar{h} := \max\{ h_\Omega, h_{\partial\Omega} \}$.
 Then there exists a constant $h_0 >0$ so that 
if $\bar{h} < h_0$ then 
\begin{equation*}
    \| u^\dagger - u^\star \|_{H^s(\Omega)}  \le C \bar{h}^{\gamma} \| u^\star \|_{\mU},
\end{equation*}
where $C >0$ is independent of $u^\dagger$ and $\bar{h}$.
\end{theorem}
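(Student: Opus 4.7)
The plan is to apply the abstract framework of \Cref{thm:abstract-error-analysis} with the augmented operator $\mT(u) := (\mP(u), \mB(u))$ acting into a product space. A natural choice of Banach spaces is
\begin{equation*}
V_1 = H^\ell(\Omega), \qquad V_2 = H^k(\Omega) \times H^t(\partial\Omega), \qquad V_3 = H^{k+\gamma}(\Omega) \times H^{t+\gamma}(\partial\Omega), \qquad V_4 = H^s(\Omega),
\end{equation*}
with the product spaces equipped with sum norms, and $\mU$ as in the statement. Condition \ref{cond:A1} is then exactly the backward stability estimate \eqref{eq:PDE-stability-estimate-backward-with-BC}; condition \ref{cond:A3} is the forward stability estimate \eqref{eq:PDE-stability-estimate-forward-with-BC}; and condition \ref{cond:A4} is the continuous embedding \ref{assumption-PC-iii}. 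For \ref{cond:A5}, I would simply note that $u^\star \in \mU$ pointwise satisfies $\mP(u^\star)(\bx)=f(\bx)$ for all $\bx \in \Omega$ and $\mB(u^\star)(\bx)=g(\bx)$ for all $\bx \in \partial\Omega$, so it is feasible for \eqref{PDE-kernel-opt-with-BC}; minimality then yields $\|u^\dagger\|_\mU \le \|u^\star\|_\mU$, giving \eqref{abs-cond-1} with constant one.

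The main obstacle, and the only place where the setting genuinely differs from the proof of \Cref{thm:PDE-error-bound-no-BC}, is the verification of \ref{cond:A2}. Define $\bar f := \mP(u^\dagger)-\mP(u^\star)$ and $\bar g := \mB(u^\dagger)-\mB(u^\star)$. By the optimization constraints in \eqref{PDE-kernel-opt-with-BC} together with the fact that $u^\star$ solves the PDE exactly, we have $\bar f|_{X_\Omega}\equiv 0$ and $\bar g|_{X_{\partial\Omega}}\equiv 0$. To bound $\bar f$ in $H^k(\Omega)$ I would invoke the Sobolev sampling inequality (\Cref{fuselier-sobolev-bound}) on the bulk domain, whose applicability is ensured by the regularity hypothesis \ref{assumption-PC-i} on $\Omega$ and the dimensional condition $d/2 < k+\gamma$ from \ref{assumption-PC-ii}, yielding $\|\bar f\|_{H^k(\Omega)} \le C\, h_\Omega^\gamma \|\bar f\|_{H^{k+\gamma}(\Omega)}$ whenever $h_\Omega$ is below a threshold $h_0'$.

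For the boundary term the corresponding sampling inequality must be applied on the $(d-1)$-dimensional smooth Riemannian manifold $\partial\Omega$ equipped with its geodesic distance $\rho_{\partial\Omega}$; the condition $(d-1)/2<t+\gamma$ in \ref{assumption-PC-ii} and the manifold regularity in \ref{assumption-PC-i} are precisely what make this manifold version applicable (this is where we need the analogue of \Cref{fuselier-sobolev-bound} on manifolds, collected in \Cref{sec:AppA}). This provides $\|\bar g\|_{H^t(\partial\Omega)}\le C\, h_{\partial\Omega}^\gamma \|\bar g\|_{H^{t+\gamma}(\partial\Omega)}$ whenever $h_{\partial\Omega}$ is below a threshold $h_0''$. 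Setting $h_0 := \min\{h_0',h_0''\}$ and summing the two bounds gives
\begin{equation*}
\|(\bar f,\bar g)\|_{V_2} \le C\, \bar h^\gamma \,\|(\bar f,\bar g)\|_{V_3},
\end{equation*}
which is exactly \ref{cond:A2} with $\veps = C\bar h^\gamma$. Invoking \Cref{thm:abstract-error-analysis} then delivers the claimed estimate, with the constant controlled by $\|u^\star\|_\mU$ through the radius in the local Lipschitz stability bounds (the radius being finite because $\|u^\dagger\|_\mU\le\|u^\star\|_\mU$ and $\mU\hookrightarrow H^s(\Omega)\hookrightarrow H^\ell(\Omega)$).
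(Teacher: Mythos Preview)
Your proposal is correct and follows essentially the same approach as the paper: apply \Cref{thm:abstract-error-analysis} with $\mT=(\mP,\mB)$ and the product spaces $V_2=H^k(\Omega)\times H^t(\partial\Omega)$, $V_3=H^{k+\gamma}(\Omega)\times H^{t+\gamma}(\partial\Omega)$ equipped with sum norms, verify \ref{cond:A1}, \ref{cond:A3}--\ref{cond:A5} from the hypotheses and the minimality of $u^\dagger$, and establish \ref{cond:A2} by combining the bulk sampling inequality on $\Omega$ with the manifold sampling inequality on $\partial\Omega$ from \Cref{fuselier-sobolev-bound}. The paper's proof is identical in structure and detail.
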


\begin{proof}
The proof follows an identical approach to \Cref{thm:PDE-error-bound-no-BC} 
and applies \Cref{thm:abstract-error-analysis} with the appropriate setup. 
We take the operator $\mT: u \mapsto \big( \mP(u), \mB(u) \big)$. 
We then choose the spaces  
$V_1 \equiv H^\ell(\Omega)$, $V_2 \equiv H^k(\Omega) \times H^t(\partial \Omega)$, 
$V_3 \equiv H^{k+\gamma}(\Omega) \times H^{t + \gamma}(\Omega)$, and $V_4 \equiv H^s(\Omega)$
where we equip $V_2$ with the norm $\| (f, g) \|_2 := \| f \|_{H^k(\Omega)} + \| g \|_{H^t(\partial \Omega)}$ 
and  similarly for $V_3$ with the $H^k(\Omega)$ and $H^t(\partial \Omega)$ norms replaced 
by $H^{k+\gamma}(\Omega)$ and $H^{t+\gamma}(\partial \Omega)$ norms.

Analogously to the proof of \Cref{thm:PDE-error-bound-no-BC}, we can verify Conditions 
\ref{cond:A1}, \ref{cond:A3}, and \ref{cond:A4} by the hypothesis of the theorem. Condition \ref{cond:A5}
is also satisfied since $u^\dagger$ is a minimizer of \eqref{PDE-kernel-opt-with-BC} 
and so $\| u^\dagger \|_\mU \le \| u^\star \|_\mU$ as $u^\dagger$ satisfies more relaxed constraints. 

It remains for us to verify \ref{cond:A2}. 
 Repeating the same argument 
as in the proof of \Cref{thm:PDE-error-bound-no-BC}, in the interior of $\Omega$, 
yields the bound 
\begin{equation}\label{disp1}
    \| \mP(u^\dagger) - \mP(u^\star) \|_{H^k(\Omega)} 
    \le C h_\Omega^\gamma \| \mP(u^\dagger) - \mP(u^\star) \|_{H^{k+\gamma}(\Omega)},
\end{equation}
whenever $h_\Omega < h_1$ and $h_1$ is a sufficiently small constant that is independent of $u^\dagger$
and $u^\star$.

Let $\bar{g} = \mB(u^\dagger) - \mB( u^\star)$ which satisfies $\bar{g}(\bx) = 0$ for all $\bx \in X_{\partial \Omega}$
and so 
$\bar{g} \in H^{t+\gamma}(\partial \Omega)$ is zero on the set $X_{\partial \Omega}$. Then \Cref{fuselier-sobolev-bound}
implies the existence of a constant $h_2 >0$ so that whenever $h_{\partial \Omega} < h_2$ we have 
\begin{equation*}
    \| \bar{g} \|_{H^{t}(\partial \Omega)} \le C h_{\partial \Omega}^\gamma \| \bar{g} \|_{H^{t + \gamma}(\partial \Omega)}.
\end{equation*}
Now take $h_0 = \min \{ h_1, h_2\}$ and combine the above bound with \eqref{disp1}, and substitute the 
definition of $\bar{g}$ to get 
\begin{equation*}
\begin{aligned}
\| \mP(u^\dagger) - \mP(u^\star) \|_{H^k(\Omega)} & + \| \mB(u^\dagger) - \mB(u^\star) \|_{H^{t}(\partial \Omega)} \\
& \le C \bar{h}^\gamma 
\big(\| \mP(u^\dagger) - \mP(u^\star) \|_{H^{k+\gamma}(\Omega)} + \| \mB(u^\dagger) - \mB(u^\star) \|_{H^{t+\gamma}(\partial \Omega)}\big),
\end{aligned}
\end{equation*}
whenever $\bar{h} < h_0$. This  verifies \ref{cond:A2} with $\veps \equiv C \bar{h}^\gamma$.
\end{proof}

\begin{remark}\label{remark:time-dependent-PDEs-and-mixed-BC}
    We highlight that our statement of \Cref{thm:PDE-error-bound-with-BC} can easily be extended 
    to PDEs with mixed boundary conditions simply by modifying the norm that is chosen on the boundary, i.e., 
    the spaces $V_2$ and $V_4$, so long as we can prove the requisite stability estimates 
    in condition \ref{assumption-PC-ii}. In particular, this idea will allow us to obtain errors for 
    time-dependent PDEs, cast as a static PDE in an space-time domain $\Omega$ with the initial and boundary 
    conditions imposed as mixed conditions on $\partial \Omega$. In fact, in the case of time-dependent PDEs 
    we do not need to impose the boundary conditions on all of $\partial \Omega$ but only on a subset. 
\end{remark}

We now return to our running example to verify \Cref{assumption:PDE-regularity-with-BC}
for the Darcy flow PDE.

\begin{example}[Nonlinear Darcy flow continued]
Consider the PDE \eqref{Darcy-PDE-example} but this time with the boundary condition 
$u = g$ on $\partial \Omega$
for a function $g \in H^{t+ \gamma}(\partial\Omega)$ with $t > \min\{ 3/2, (d-1)/2 \}$ and $\gamma >0$. Now fix a function $\varphi \in H^{t + \gamma + 1/2}(\Omega)$
so that its trace coincides with $g$ and define $v = u - \varphi$ and observe that 
$u$ solves the above PDE if $v$ solves 
 \begin{equation*}
        \left\{
        \begin{aligned}
          -\Div \big( \exp(a ) \nabla v \big) + \tau'(v)  & = f'  ,  && \bx \in \Omega, \\
          v & = 0, && \bx \in  \partial \Omega,
        \end{aligned}
        \right.
    \end{equation*}
    where we defined $\tau'(v):= \tau( v + \varphi)$ and $f':= f + \Div \big( \exp(a ) \nabla \varphi \big)$. 
    Now observe that the functions $\tau'$ and $f'$ still satisfy the same conditions as $\tau, f$ in 
    \Cref{ex:Darcy-2} and so we obtain existence and uniqueness of the solutions $v$ and in turn 
    $u$.
    
    Now consider two solutions $u_1, u_2$ arising from source terms $f_1, f_2$ and boundary data  
    $g_1, g_2$. Then the error $w = u_1 - u_2$ solves the PDE 
    \begin{equation*}
        \left\{
        \begin{aligned}
          -\Div \big( \exp(a ) \nabla w \big)   & = f + \tau(u_2) - \tau(u_1)   ,  && \bx \in \Omega, \\
          w & = g_1 - g_2, && \bx \in  \partial \Omega,
        \end{aligned}
        \right.
    \end{equation*}
    By standard stability results for linear elliptic PDEs \cite[Thm.~4.18]{mclean2000strongly} we have 
    \begin{equation*}
        \| w \|_{H^2(\Omega)} \le B( \| f_1 - f_2 \|_{L^2(\Omega)} + \| \tau( u_1) - \tau(u_2) \|_{L^2(\Omega)} 
        + \| g_1 - g_2 \|_{H^{3/2}(\Omega)} ) 
    \end{equation*}
    We can now repeat the same argument as in the final steps of \Cref{ex:Darcy-2} to get the bound 
        \begin{equation*}
     \| w\|_{H^2(\Omega)} \le  \frac{B}{1-B\beta} \left( \| f_1 - f_2 \|_{H^2(\Omega)} 
     + \| g_1 - g_2 \|_{H^{3/2}(\Omega)}\right),
     \end{equation*}
     which verifies \Cref{assumption:PDE-regularity-with-BC}(ii) with $s = 2$ provided that $\beta B < 1$.
\end{example}

\subsection{The Case of Parametric PDEs}\label{subsec:Parametric-PDE-error-bounds}
We now consider the setting of the parametric PDE \eqref{abstract-parametric-PDE}. Our error 
estimates can be viewed as further extending  \Cref{thm:PDE-error-bound-with-BC} with 
additional assumptions due to the fact that we will need to approximate the solutions
on the set $\Upsilon = \Omega \times \Theta$
as well as its relevant boundary which needs to be sufficiently regular for us to apply 
\Cref{fuselier-sobolev-bound}. Beyond this technical point, the statement and proof of the 
result for parametric PDEs is identically to PDEs with boundary conditions and so we state our results 
succinctly, starting with the requisite assumptions on the parametric PDE. 

\begin{assumption}\label{assumption:parametric-PDE-convergence}
The following conditions hold: 
\begin{enumerate}[label=(D\arabic*)]
    \item $\Omega \subset \R^d$ and $\Theta \subset \R^p$ are compact sets such that 
    $\partial \Omega$ and $\partial \Theta$ are smooth Riemannian  manifolds of dimensions $d-1$ and $p -1$
    respectively. 
    \label{assumption:PPB-i}

    \item {\it (Stability of the parametric PDE)} 
    There exist $\gamma>0$
    and $k, t \in \mathbb{N}$ satisfying $(d+ p)/2 < k+\gamma $ and $(d + p -1)/2 < t+\gamma$, and 
    Banach spaces $V_1$ and $V_4$ 
    so that for any $r > 0$
    it holds that 
    \label{assumption:PPB-ii}
    \begin{equation}\label{eq:parametric-PDE-stability-estimate-backward}
        \begin{aligned}
        & \| u_1 - u_2 \|_{1}  \\
        & \quad \le  C \big( \| \mP(u_1) - \mP(u_2) \|_{H^{k}(\Upsilon)} 
        + \| \mB(u_1) - \mB(u_2) \|_{H^{t}(\partial\Upsilon)} 
        \big)   
        \quad \forall u_1, u_2 \in B_r(V_1),
        \end{aligned}
    \end{equation}
    \begin{equation}\label{eq:parametric-PDE-stability-estimate-forward}
    \begin{aligned}
    \| \mP(u_1) - \mP(u_2) \|_{H^{k+\gamma}(\Upsilon)} 
    & + \| \mB(u_1) - \mB(u_2) \|_{H^{t + \gamma}(\partial \Upsilon)}  \\
     & \le C \| u_1 - u_2 \|_{4},  
         \quad \forall u_1, u_2 \in B_r( V_4), 
    \end{aligned}
    \end{equation}
    where $C = C(r) >0$ is a constant independent of the $u_i$.
    \item $\mU$ is continuously 
    embedded in $V_4$. 
    \label{assumption:PPB-iii}
\end{enumerate}
\end{assumption}


Unlike Assumptions~\ref{assumption:PDE-regularity} and \ref{assumption:PDE-regularity-with-BC}
here we left the function spaces $V_1$ and $V_4$ as generic Banach spaces of functions $u : \Upsilon \mapsto \R$ 
since, for parametric PDEs, we can often obtain the desired stability results in non-standard norms, 
such as the mixed norm in \Cref{paramexdfl} below, as opposed to the Sobolev norms 
used for the non-parametric PDE setting. More generally, one may also impose $V_2,V_3$ to be generic Banach spaces rather than the standard Sobolev spaces. The Sobolev space setting suffices for applications in this paper.

With the above assumptions we can now present our main result for the parametric PDE setting. 
The proof is omitted since it is identical to that of \Cref{thm:PDE-error-bound-with-BC} except that 
(1) the argument on  $\partial \Omega$ is now repeated for $\partial \Upsilon = \partial \Omega \times \Theta$ 
which is in general a smooth manifold with boundary but this modification does not affect any of the steps 
in the proof, and (2) the results are stated in terms of the norm on the space $V_1$.

\begin{theorem}\label{thm:parametric-PDE-error-bound}
Suppose \Cref{assumption:parametric-PDE-convergence} is satisfied and let 
 $u^\star \in \mU$ denote the unique strong solution of \eqref{abstract-parametric-PDE}.
Let $u^\dagger$ be a minimizer of \eqref{parametric-PDE-kernel-opt} with a set of collocation points
$S \subset \Upsilon \cup \partial \Upsilon$ where $S_\Upsilon \subset S$ denotes the collocation points 
in the interior of $\Upsilon$ and $S_{\partial \Upsilon}$ denotes the collocation points 
on the boundary $\partial\Upsilon$. Define the fill-distances 
\begin{equation*}
    h_{\Upsilon} : = \sup_{\bs' \in \Upsilon} \inf_{\bs \in S_\Upsilon} | \bs' - \bs|, 
    \qquad 
    h_{\partial\Upsilon} : = \sup_{\bs' \in \partial\Upsilon} \inf_{\bs \in S_{\partial\Upsilon}} 
    \rho_{\partial \Upsilon} (\bs', \bs),
\end{equation*}
where $\rho_{\partial \Upsilon}: \partial \Upsilon \times \partial \Upsilon \to \R_+$ is the geodesic 
distance defined on $\partial \Upsilon$ (see \Cref{sec:AppA}),
and set $\bar{h} := \max\{ h_\Upsilon, h_{\partial\Upsilon} \}$.
 Then there exists a constant $h_0 >0$ so that 
if $\bar{h} < h_0$ then 
\begin{equation*}
    \| u^\dagger - u^\star \|_{1}  \le C \bar{h}^{\gamma} \| u^\star \|_{\mU},
\end{equation*}
where $C >0$ is independent of $u^\dagger$ and $\bar{h}$.
\end{theorem}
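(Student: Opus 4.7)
The plan is to apply the abstract convergence framework \Cref{thm:abstract-error-analysis} with the operator $\mT : u \mapsto (\mP(u), \mB(u))$, following the strategy already used for \Cref{thm:PDE-error-bound-with-BC} but now on the product domain $\Upsilon = \Omega \times \Theta$. I would take $V_1$ and $V_4$ exactly as furnished by \Cref{assumption:parametric-PDE-convergence}, and set $V_2 := H^k(\Upsilon) \times H^t(\partial\Upsilon)$ equipped with the norm $\|(f,g)\|_2 := \|f\|_{H^k(\Upsilon)} + \|g\|_{H^t(\partial\Upsilon)}$, and similarly $V_3 := H^{k+\gamma}(\Upsilon) \times H^{t+\gamma}(\partial\Upsilon)$ with the analogous sum norm.

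Next I would verify the five abstract hypotheses. Conditions \ref{cond:A1} and \ref{cond:A3} come directly from \eqref{eq:parametric-PDE-stability-estimate-backward} and \eqref{eq:parametric-PDE-stability-estimate-forward}; condition \ref{cond:A4} is the continuous embedding $\mU \hookrightarrow V_4$ from \ref{assumption:PPB-iii}. Condition \ref{cond:A5} holds with constant $1$ because $u^\star$ is feasible for the optimization problem \eqref{parametric-PDE-kernel-opt} (it satisfies the PDE at every point of $\Upsilon$, and in particular at the collocation points $S$), so the minimum-norm property of $u^\dagger$ immediately yields $\|u^\dagger\|_\mU \le \|u^\star\|_\mU$.

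The only substantive step is verifying \ref{cond:A2}. Set $\bar f := \mP(u^\dagger) - \mP(u^\star)$ and $\bar g := \mB(u^\dagger) - \mB(u^\star)$; by construction $\bar f$ vanishes on $S_\Upsilon$ and $\bar g$ vanishes on $S_{\partial\Upsilon}$. Applying the sampling inequality \Cref{fuselier-sobolev-bound} on the compact set $\Upsilon \subset \R^{d+p}$ yields, whenever $h_\Upsilon < h_1$ for some threshold $h_1 > 0$, a bound $\|\bar f\|_{H^k(\Upsilon)} \le C\, h_\Upsilon^\gamma \|\bar f\|_{H^{k+\gamma}(\Upsilon)}$; the hypothesis $(d+p)/2 < k+\gamma$ in \ref{assumption:PPB-ii} is exactly what the sampling inequality demands. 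The same inequality applied to the Riemannian manifold $\partial\Upsilon$ of dimension $d+p-1$ produces $\|\bar g\|_{H^t(\partial\Upsilon)} \le C\, h_{\partial\Upsilon}^\gamma \|\bar g\|_{H^{t+\gamma}(\partial\Upsilon)}$ whenever $h_{\partial\Upsilon} < h_2$. Setting $h_0 := \min\{h_1,h_2\}$ and adding the two bounds gives $\|\mT(u^\dagger) - \mT(u^\star)\|_2 \le C\,\bar h^\gamma \|\mT(u^\dagger) - \mT(u^\star)\|_3$, verifying \ref{cond:A2} with $\varepsilon = C\,\bar h^\gamma$. The desired estimate $\|u^\dagger - u^\star\|_1 \le C\,\bar h^\gamma \|u^\star\|_\mU$ then follows from \Cref{thm:abstract-error-analysis}.

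The main technical obstacle relative to the non-parametric case is that $\partial\Upsilon = \partial\Omega \times \Theta$ is a smooth manifold with boundary (inherited from $\partial\Theta$), not a closed manifold, so one must invoke the version of the sampling inequality on manifolds with boundary and the associated geodesic distance $\rho_{\partial\Upsilon}$. Under \ref{assumption:PPB-i} this is handled by the results collected in \Cref{sec:AppA}, and once this technicality is dispatched, the argument is a mechanical adaptation of the proof of \Cref{thm:PDE-error-bound-with-BC} with $(d,\Omega,\partial\Omega)$ replaced by $(d+p,\Upsilon,\partial\Upsilon)$ and with $V_1,V_4$ taken in the generic form allowed by \Cref{assumption:parametric-PDE-convergence}.
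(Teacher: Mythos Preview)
Your proposal is correct and follows exactly the approach the paper takes: the paper omits the proof, stating it is identical to that of \Cref{thm:PDE-error-bound-with-BC} except that the boundary argument is carried out on $\partial\Upsilon = \partial\Omega \times \Theta$ (a manifold with boundary) and the result is stated in the generic $V_1$ norm. You have reproduced precisely this adaptation, including the correct identification of the technical point about $\partial\Upsilon$ and its resolution via \Cref{sec:AppA}.
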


We end this section by returning to our example of the Darcy flow PDE but this time in the setting  where
the coefficient $a$ and the source $f$ are dependent on a finite dimensional parameter $\btheta$.
We will show that \Cref{assumption:parametric-PDE-convergence} can be verified 
in this case, with $V_1$ and $V_4$ taken as Banach spaces with mixed 
Sobolev and $L^2$ norms, and so \Cref{thm:parametric-PDE-error-bound} is applicable. 

\begin{example}[1D Parametric Darcy flow PDE] \label{paramexdfl}
Consider the parametric elliptic PDE
 \begin{equation*}
        \left\{
        \begin{aligned}
          -\Div \big( A(\bx,\btheta) \nabla u \big)   & = f(\bx,\btheta) && \bx \in \Omega, \\
          u & = 0, && \bx \in  \partial \Omega,
        \end{aligned}
        \right.
    \end{equation*}
    over a compact domain $\Omega$  and $\btheta \in \Theta$ where both $\Omega$ and $\Theta$ are assumed to 
    satisfy condition~\ref{assumption:PPB-i}; e.g., take $\Omega$ and $\Theta$ to be unit balls. 
    For simplicity we are ignoring the boundary operator in this case and imposing 
    homogeneous Dirichlet boundary conditions.
    In this example we assume $a$ is smooth in both $\bx$ and $\btheta$, and there exists $m, M >0$ such that $m \leq A(\bx,\btheta)\leq M$. As a concrete example we may take $A(\bx, \btheta) = \sum_{j=1}^p \theta_j \psi_j(\bx)$ 
    where the $\psi_j$ are a set of smooth functions on $\overline{\Omega}$ that are uniformly bounded from 
    below. 

First, the boundness of the operator $\mathcal{P}$ is straightforward to obtain, since $a$ is smooth and its derivatives will be bounded in the bounded domain $\Omega \times \Theta $. More precisely, for any $\gamma >0$, since $f =  -\Div \big( A(\bx,\btheta) \nabla u \big) = - \nabla_{\bx} A \cdot\nabla_{\bx} u - A \Delta_{\bx} u$, there exists some constant $C$ independent of $u$ and $f$ such that
\begin{equation*}
    \|f\|_{H^{\gamma}(\Omega \times\Theta)} \leq C\|u\|_{H^{\gamma+2}(\Omega \times\Theta)}\, .
\end{equation*}
Due to the linearity of the equation, by replacing $u$ by $u_1-u_2$ and noting that $f = \mP u = \mP u_1 -\mP u_2$, we obtain the forward stability
\begin{equation}
\label{eqn: parametrized Darcy, forward stab}
    \|\mP u_1 - \mP u_2\|_{H^{\gamma}(\Omega \times\Theta)} \leq C\|u_1 - u_2\|_{H^{\gamma+2}(\Omega \times\Theta)}\, .
\end{equation}


For the backward stability estimate, via intergation by parts, we have
\begin{equation*}
\begin{aligned}
  \int_{\Upsilon} A (\bx, \btheta) |\nabla_{\bx} u(\bx,\btheta)|^2 \, {\rm d} \bx{\rm d}\btheta 
  &= \int_{\Upsilon} u(\bx,\btheta)f(\bx,\btheta)\, {\rm d} \bx{\rm d}\btheta\\
  & \leq \int_{\Theta} \|u(\cdot,\btheta)\|_{L^2(\Omega)}\|f(\cdot,\btheta)\|_{L^2(\Omega)}\, {\rm d}\btheta\\
  & \leq C_0 \int_{\Theta} \|\nabla_\bx u(\cdot,\btheta)\|_{L^2(\Omega)}\|f(\cdot,\btheta)\|_{L^2(\Omega)}\, {\rm d}\btheta\\
  & \leq C_1 \|u\|_{L^2(\Theta, H^1_0(\Omega))} \|f\|_{L^2(\Theta, L^2(\Omega))}\, ,
\end{aligned}
\end{equation*}
where in the first and third inequalities, we used the Cauchy-Schwarz inequality; in the second inequality, we used the Poincar\'e inequality as $u(\cdot, \btheta)$ is zero on $\partial \Omega$. Here we used the notation: 
\[\|u\|_{L^2(\Theta, H^1_0(\Omega))}^2 :=\int_\Theta \|u(\cdot,\btheta)\|^2_{H^1_0(\Omega)}\, {\rm d}\btheta\, 
\quad \text{and} \quad 
\|f\|^2_{L^2(\Theta, L^2(\Omega))} := \int_\Theta \| f (\cdot, \btheta) \|_{L^2(\Omega)}^2 \dd \btheta.\]
Note that the $L^2(\Theta, L^2(\Omega))$ norm is also equivalent to the $L^2(\Omega \times \Theta)$ norm.
Now, using the bound on $A$, we obtain that there exists a constant $C$ such that
\[ \|u\|_{L^2(\Theta, H^1(\Omega))}\leq C\|f\|_{L^2(\Omega \times \Theta)}\, .\]
Similar to the proof for the forward stability, the backward stability follows by the linearity of the equation. We have
\begin{equation}
\label{eqn: parametrized Darcy, backward stab}
    \|u_1 - u_2\|_{L^2(\Theta, H^1(\Omega))}\leq C\|\mP u_1 -\mP u_2\|_{L^2(\Omega \times \Theta)}.
\end{equation}
Thus it follows that we can verify condition~\eqref{eq:parametric-PDE-stability-estimate-backward} 
with the norm $\| \cdot \|_1 \equiv \| \cdot \|_{L^2(\Theta, H^1_0(\Omega) )}, \|\cdot\|_4 = \|\cdot\|_{H^{\gamma+2}(\Omega\times\Theta)}$, and $k = 0$.

\end{example}

\subsection{Bounding Fill-distances}
Our bounds in \Cref{thm:PDE-error-bound-no-BC,thm:parametric-PDE-error-bound} 
are given in terms of the fill distances $h$ of our collocation points.
In this section, we provide an upper bound of these fill-in distances
in terms of the number of collocation points, under the assumptions that the points are randomly drawn 
according to uniform distributions both in the interior of 
the domains and their pertinent boundaries. Throughout this section we only consider 
the case of non-parametric PDEs, hence we work with $\Omega$, assumed to be a compact subset of 
$\R^d$ with boundary $\partial \Omega$ which is a compact smooth manifold of dimension $d-1$. We focus on the non-parametric setting for simplicity and our results can easily be extended to the 
parametric PDE setting by simply replacing $\Omega$ with $\Upsilon$ as a compact subset of $\R^{d + p}$.

\begin{proposition}
\label{prop: fill-in dist bound}
Suppose we sample $M_{\Omega}$ points in $\Omega$ and $M_{\partial\Omega}$ points on $\partial \Omega$, uniformly with respect to the canonical volume and surface measures. Let $\delta >0$. Then, with probability at least $1-\delta$, the fill-in distances $h_{\Omega}$ and $h_{\partial\Omega}$ satisfy
\[h_{\Omega} \leq C\left(\frac{\log(M_{\Omega}/\delta)}{M_{\Omega}}\right)^{1/d}, \quad h_{\partial\Omega} \leq C\left(\frac{\log(M_{\partial\Omega}/\delta)}{M_{\partial\Omega}}\right)^{1/{(d-1)}}, \]
where $C$ is a constant independent of $M_{\Omega}, M_{\partial\Omega}$ and $\delta$.
\end{proposition}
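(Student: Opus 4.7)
The plan is a standard covering/union-bound argument, applied twice: once to $\Omega$ with its canonical volume measure, and once to $\partial\Omega$ with its canonical surface measure and geodesic distance. The same template works in both cases, with the ambient dimension $d$ replaced by the intrinsic dimension $d-1$ for the boundary step.

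For the interior bound, I would first construct an $r$-net $\{z_1,\ldots,z_N\} \subset \Omega$ with the property that every $\bx' \in \Omega$ lies within distance $r/2$ of some $z_i$; a volume-packing argument for a compact set with Lipschitz boundary gives $N \le C r^{-d}$. By the triangle inequality, if each $B(z_i,r/2) \cap \Omega$ contains at least one of the $M_\Omega$ sample points, then $h_\Omega \le r$. A uniform interior cone (or Lipschitz boundary) condition yields a lower bound $\mathrm{vol}\bigl(B(z_i,r/2) \cap \Omega\bigr) \ge c\, r^d$ uniformly in $z_i \in \Omega$ for $r$ small, so the probability that a single uniform sample falls in a prescribed ball is at least $p \gtrsim r^d$. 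A union bound over the $N$ balls yields
\begin{equation*}
\PP(h_\Omega > r) \le N (1-p)^{M_\Omega} \le C r^{-d} \exp(-c r^d M_\Omega).
\end{equation*}
Setting the right-hand side equal to $\delta$ and solving for $r$ produces $r = C(\log(M_\Omega/\delta)/M_\Omega)^{1/d}$ after absorbing constants, which gives the first claim.

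For the boundary bound, the same scheme is applied on the compact smooth $(d-1)$-dimensional manifold $\partial\Omega$ endowed with the geodesic distance $\rho_{\partial\Omega}$ and surface measure. Compactness and smoothness imply that, for small enough $r$, geodesic balls of radius $r/2$ have surface measure comparable to $r^{d-1}$, uniformly in their center, and the covering number satisfies $N \le C r^{-(d-1)}$. The union bound then reads $\PP(h_{\partial\Omega} > r) \le C r^{-(d-1)} \exp(-c r^{d-1} M_{\partial\Omega})$, and inverting gives $r = C(\log(M_{\partial\Omega}/\delta)/M_{\partial\Omega})^{1/(d-1)}$. A final union bound over the two failure events (each taken with probability $\delta/2$) then combines both estimates and only changes $\delta$ by a factor of $2$ inside the logarithm, which is absorbed in $C$.

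The main technical obstacles are the two uniform volume lower bounds: (i) $\mathrm{vol}(B(z, r/2) \cap \Omega) \gtrsim r^d$ for all $z \in \Omega$, which requires a quantitative interior condition at points near $\partial\Omega$ and is standard under a Lipschitz boundary assumption; and (ii) the analogous lower bound for geodesic balls on the smooth manifold $\partial\Omega$, which follows from normal-coordinate comparisons between geodesic and Euclidean balls at small radius together with compactness so that the implied constant is uniform. Everything else is elementary probability: a Bernoulli tail estimate and a union bound over a net.
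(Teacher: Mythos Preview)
Your argument is correct. The paper takes a shorter route: it cites a black-box covering result of Reznikov and Saff (their Theorem~2.1), which says that if $\mu(B(x,r)) \ge \Phi(r)$ uniformly for small $r$, then for i.i.d.\ samples from $\mu$ one has $\PP\bigl[h_\M \ge c_1\Phi^{-1}(\alpha\log N/N)\bigr] \le c_2 N^{1-c_3\alpha}$. The paper then simply plugs in $\Phi(r)=Cr^d$ for $\Omega$ and $\Phi(r)=Cr^{d-1}$ for $\partial\Omega$, and chooses $\alpha$ so that the right-hand side is at most $\delta$.

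Your proposal is effectively a from-scratch proof of that cited lemma in the special case $\Phi(r)\asymp r^k$: you build the $r$-net explicitly, bound the covering number by $r^{-k}$, invoke the same uniform ball-measure lower bound, and run the Bernoulli/union-bound computation directly. The underlying content is identical; the two ``technical obstacles'' you isolate (uniform lower bounds on $\mathrm{vol}(B(z,r)\cap\Omega)$ and on geodesic balls in $\partial\Omega$) are exactly the hypothesis $\mu(B(x,r))\ge\Phi(r)$ that the paper invokes when applying the cited result. Your route is more self-contained and transparent about where the exponents $d$ and $d-1$ come from; the paper's route is shorter on the page but hides the probabilistic step inside the citation.
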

The proof of \Cref{prop: fill-in dist bound} can be found in \Cref{sec: Bounds on fill-in distance}.

Let's combine \Cref{prop: fill-in dist bound} and previous error estimates to get error bounds regarding the number of collocation points. In the case of \Cref{thm:PDE-error-bound-no-BC} where there is no boundary, we get
\[\| u^\dagger - u^\star \|_{H^{s}(\Omega)} \le C\left(\frac{\log(M_{\Omega}/\delta)}{M_{\Omega}}\right)^{\gamma/d} \| u^\star \|_\mU, \]
while in the case of \Cref{thm:PDE-error-bound-with-BC} where boundary is considered, we have 
\[\| u^\dagger - u^\star \|_{H^{s}(\Omega)} \leq C\left( \left(\frac{\log(M_{\Omega}/\delta)}{M_{\Omega}}\right)^{\gamma/d} + \left(\frac{\log(M_{\partial\Omega}/\delta)}{M_{\partial\Omega}}\right)^{\gamma/(d-1)} \right) \| u^\star \|_\mU.  \]
More generally, we note that the bounds in \Cref{prop: fill-in dist bound} can be applied to the abstract setting in \Cref{thm:abstract-error-analysis} when $\epsilon$ depends on the fill-in distance. 

If $s$ and $\gamma$ are appropriately chosen such that the required assumptions hold, and $\gamma \geq d/2$, then the convergence rate is at least as fast as the Monte Carlo rate, for uniformly sampled collocation points. There is no curse of dimensionality in this case.
 

\section{Numerical Experiments}\label{sec:numerics}
In this section, we study several numerical examples to demonstrate the interplay between the dimensionality of the problem and the regularity of the solution. Our
theory demonstrates that this interplay is central to determining the convergence rate,
and hence accuracy, of the methodology studied in this paper.

In \Cref{exp sec: high dim PDEs}, we consider a high dimensional elliptic PDE with smooth solutions. By varying the dimension of the problem and the frequency of the solution, we demonstrate dimension-benign convergence rates, and in particular the accuracy is better when the frequency of the solution is lower. In \Cref{exp sec: Parametric PDEs}, we consider a high dimensional parametric PDE problem to illustrate the importance of choosing kernels that adapted to the regularity of the solution. In \Cref{section: High D HJB eqn}, we present a high dimensional Hamilton-Jacobi-Bellman (HJB) equation, which goes beyond our theory and demonstrates the interplay between dimensionality and regularity.

\subsection{High Dimensional PDEs}
\label{exp sec: high dim PDEs}
Consider the variable coefficient nonlinear elliptic PDEs
\begin{equation}\label{eq:yifan-darcy}
      \left\{
      \begin{aligned}
        - \nabla \cdot ( A \nabla  u ) + u^3  &= f, && \text{in } \Omega, \\
        u &= g, && \text{on } \partial \Omega.  
      \end{aligned}
      \right.
  \end{equation}
We set $A(\bx) =  \exp \left( \sin \left(\sum_{j=1}^d \cos(x_j)\right) \right)$, and the ground truth solution \[u^\star(\bx;\beta) = \exp(\sin(\beta \sum_{j=1}^d \cos(x_j)))\, ,\]
where we have a parameter $\beta$ to control the frequency of $u$. The right hand side and boundary data are obtained using $A$ and $u^\star$.

In the experiment, we choose the domain $\Omega$ to be the unit ball in $\R^d$ for $d = 2,3,\ldots,6$. We sample $M_{\Omega} = 1000,2000,4000,8000$ points uniformly in the interior, and respectively $M_{\partial \Omega} = 200,400,800, 1600$ points uniformly on the boundary.

After selecting the kernel function, the number of iteration steps in our algorithm is set to be $3$ with initial solution $0$. We sample another set of $M_{\Omega}$ test points and evaluate the $L^2$ error of the solution on these points. The results are averaged over $10$ independent draws of the uniform collocation points.

In the first experiment, we choose the Mat\'ern kernel with $\nu = 7/2$ and with lengthscale $\sigma = 0.25\sqrt{d}$. We choose $\beta = 1, 4$, to compare the convergence given ground truth with different frequencies. The results are shown in \cref{fig-high-d-example-vary-beta}. It is clear that when $\beta$ is small, the accuracy is better. The slopes of convergence curves also have a tendency to improve for $d\geq 3$ if we increase $\beta$.
 \begin{figure}[htp]
     \centering
     \includegraphics[width=7.5cm]{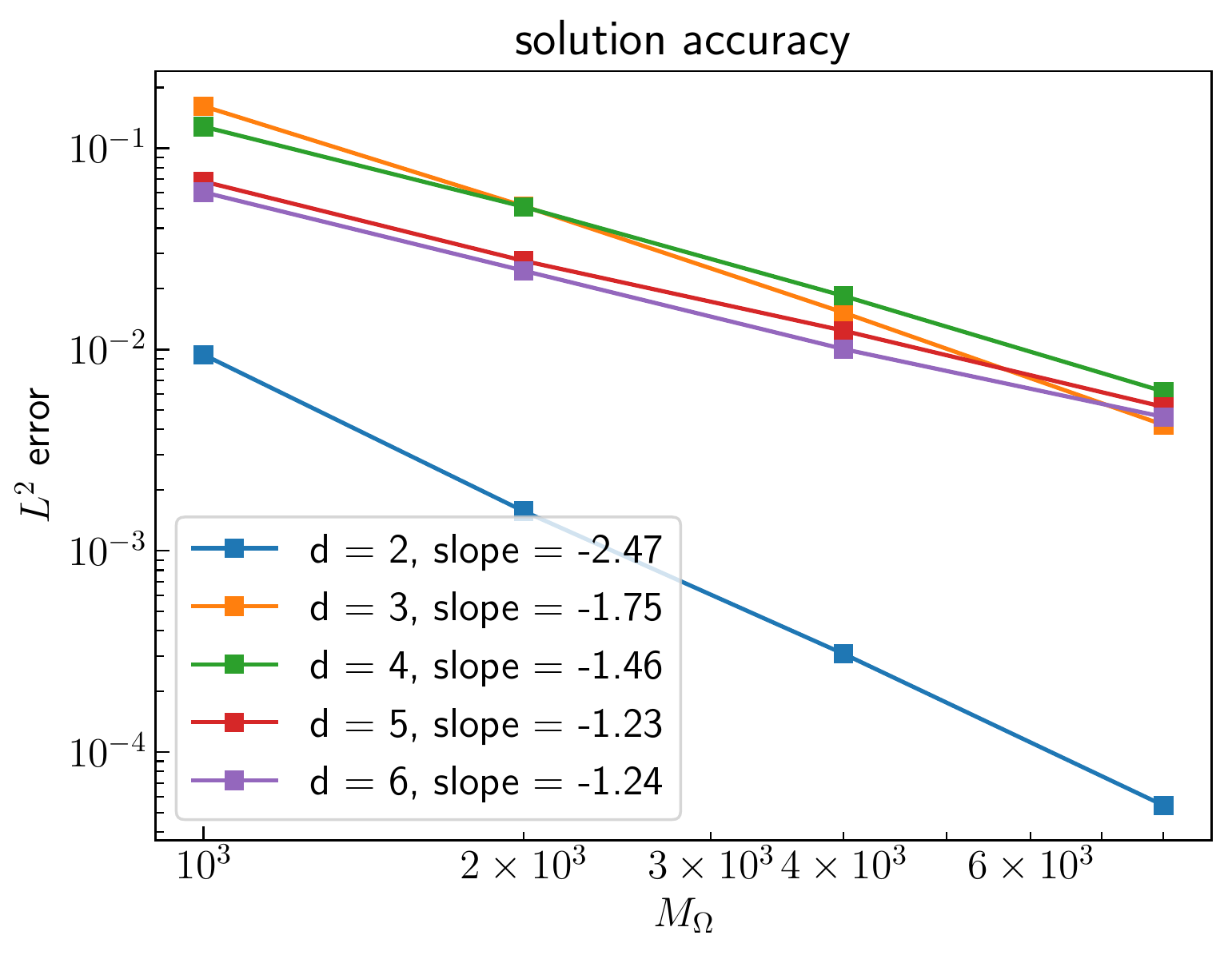}
     \includegraphics[width=7.5cm]{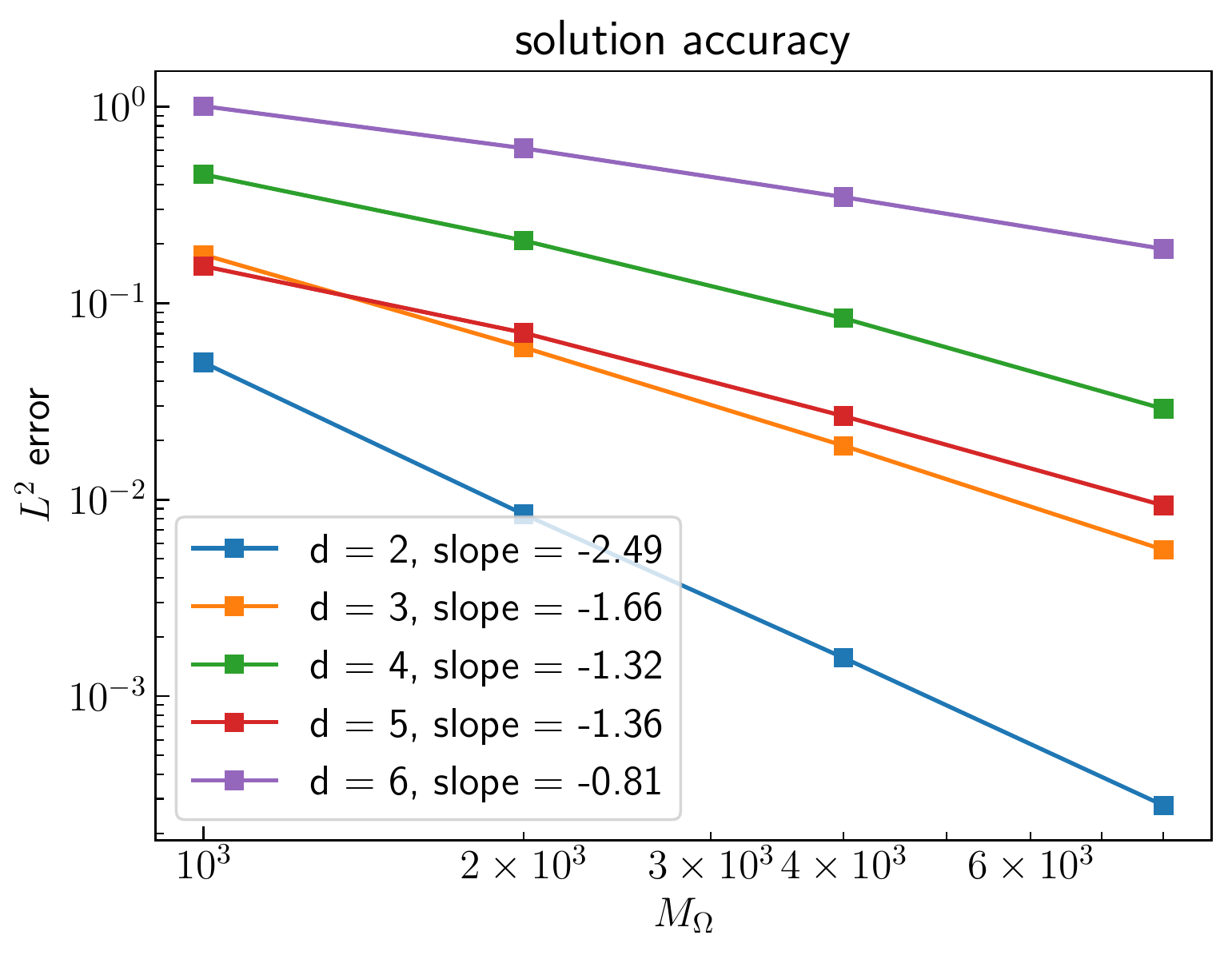}
     \caption{
 $L^2$ test errors of  solutions to Problem~\eqref{eq:yifan-darcy} as a function of the number of collocation points. 
     Left: $\beta = 1$; right: $\beta =4$. In both cases, we choose Mat\'ern kernel with $\nu = 7/2$. 
     Reported slopes in the legend denote empirical convergence 
     rates.
     }
     \label{fig-high-d-example-vary-beta}
 \end{figure}
 
In the second experiment, we fix $\beta = 4$, and choose the Mat\'ern kernel with $\nu = 5/2, 9/2$ and with  lengthscale $\sigma = 0.25\sqrt{d}$. Results are shown in \Cref{fig-high-d-example-vary-kernel}. Comparing $\nu = 5/2,9/2$ and $\nu=7/2$ in the last example, we observe that increasing $\nu$ leads to faster convergence. This is due to the fact that the true solution is smooth. In dimension $d=2$, we can identify the exact convergence rate as $\nu-1$. In all dimensions, the rate is faster than the Monte Carlo rate. 
We observe that the regularity of the 
solution softens the effect of the 
curse of dimensionality, i.e., convergence 
rates are better in higher dimensions when $\beta$ is 
smaller.
 \begin{figure}[ht]
     \centering
     \includegraphics[width=7.5cm]{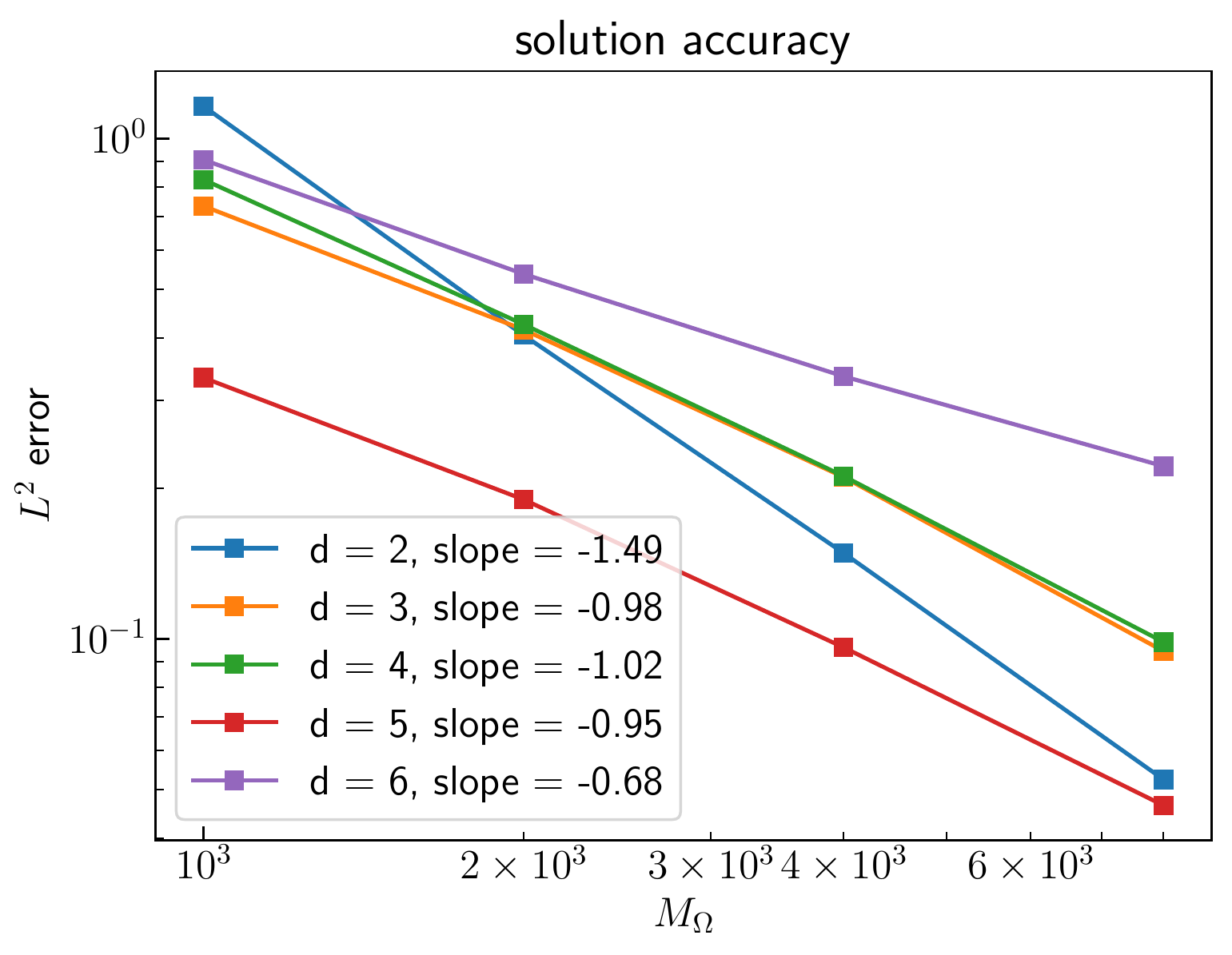}
     \includegraphics[width=7.5cm]{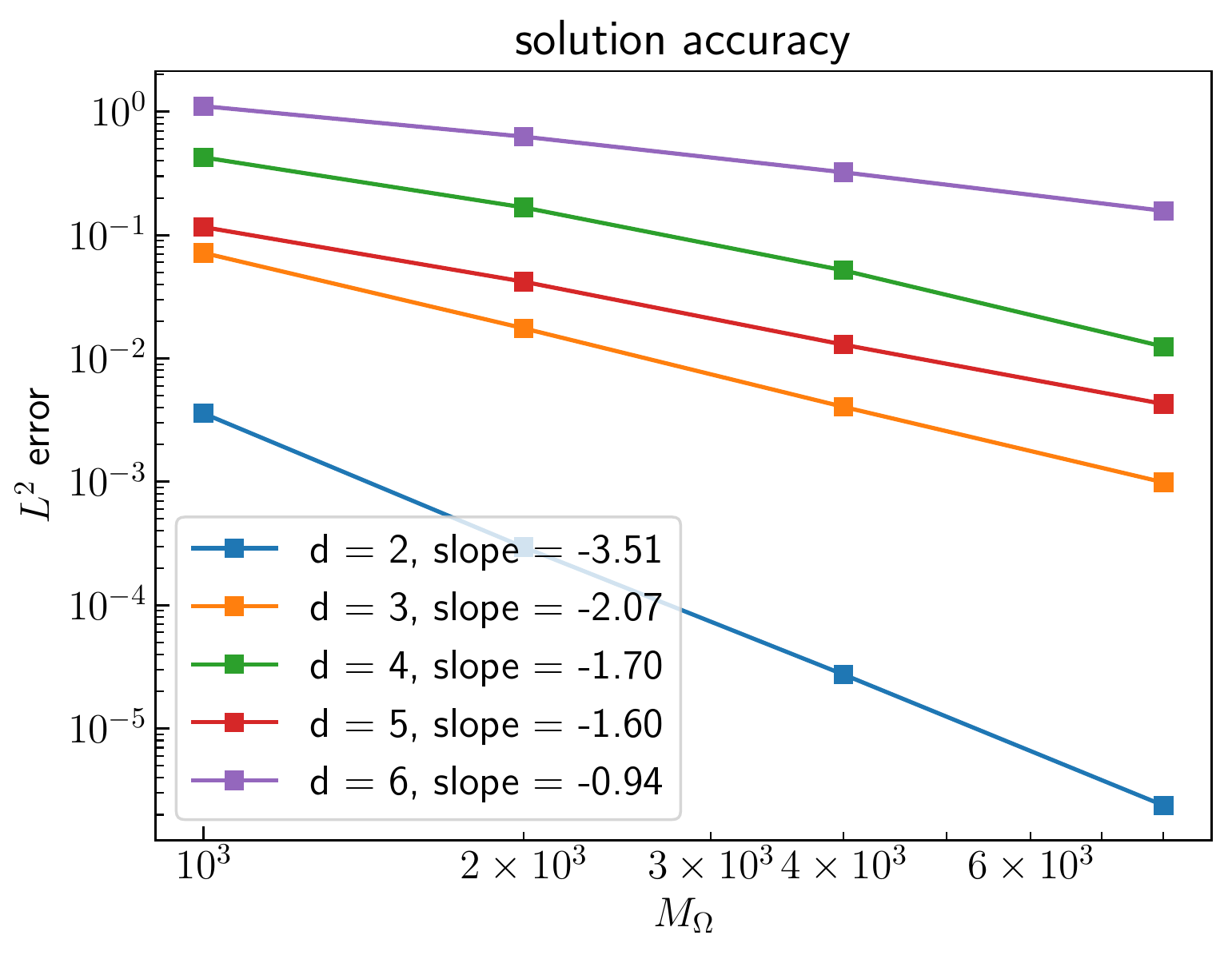}
     \caption{
     $L^2$ test errors of  solutions to Problem~\eqref{eq:yifan-darcy} as a function of the number of collocation points with $\beta = 4$. 
     Left: Mat\'ern kernel with $\nu = 5/2$; right: Mat\'ern kernel with $\nu = 9/2$.      Reported slopes in the legend denote empirical convergence 
     rates.}
     \label{fig-high-d-example-vary-kernel}
 \end{figure}

\subsection{Parametric PDEs}
\label{exp sec: Parametric PDEs}
We consider a parametric version of the linear ($\tau = 0$) darcy flow problem in \cref{ex:darcy-flow}:
      \begin{equation}\label{eq:parameq1}
        \left\{
        \begin{aligned}
          -\Div \big( \exp(a(\bx, \btheta) ) \nabla u \big) (\bx)  & = f(\bx), && \bx \in \Omega, \\
          u(\bx) & = g(\bx), && \bx \in  \partial \Omega,
        \end{aligned}
        \right.
    \end{equation}

Following the general form \cref{abstract-parametric-PDE}, we aim to obtain the solution as a function taking values in the product space $\Upsilon$. \cref{eq:parameq1} can be rewritten in terms of $\bs = (\bx, \btheta)$ with new forcing terms $\hat f$ and $\hat g$ depending only on the first coordinate of $\bs$

\begin{equation}\label{eq:parameq2}
    \left\{
    \begin{aligned}
      -\Div_{\bx} \big( A(\bs) \nabla_{\bx} u(\bs) \big) (\bs)  & = \hat{f}(\bs) = f(\bx), && \bs \in \Upsilon, \\
      u(\bs) & = \hat{g}(\bs) = g(\bx), && \bs \in \partial \Upsilon.
    \end{aligned}
    \right.
\end{equation}
Recall that we defined $\partial \Upsilon = 
\partial \Omega \times \Theta$.
For our numerical example, we let $d = 1$ and vary $p$. We set $A(x, \theta) = 2+\theta_0 + \sum_{j = 1}^{p}\frac{\theta_j}{j^k}\sin (\pi x + j)$, $f(x) = x$ and $g(x) = 0$, a similar setting as in \cite{Chkifa2012}. We choose $\Omega = [0,1]$, and $\Theta =[0,1]^{p}$, for $p = 2, 3, \ldots, 6$. Note $1 \leq A(\bs) \leq 4$ since the sum is in $[-1,1]$ for all $p$ and $\theta \in \Theta$, matching the setting of \cref{paramexdfl}. 

We sample different $M_{\Omega}$ points uniformly in the interior, and $M_{\partial \Omega} = M_\Omega / 10$ points uniformly on the boundary of $x$. We do two experiments with different choices of kernel, in the first (\cref{fig:parametricnumerics}, left), a vanilla Gaussian kernel with different length scales for the $x$ and $\theta$ dimension, and with a scaling of the length scale in $\theta$ proportional to $\sqrt{p}$. In the second one (\cref{fig:parametricnumerics}, right), we adapt the Gaussian kernel to the decay in $A(x, \theta)$, by including the decay of $1/j^k$ in the norm in $\theta$ space used by the kernel. We see significant improvement in test error using this adaptation in high dimensions, which suggests future research directions of kernel adaptation to the specific form of the PDE.  
In all cases, we use a cross-validation procedure for hyperparameter tuning and we observe the average $L^2$ test error on an independent set of test points for different values of $p$ and $M_\Omega$.
Since $d=1$ we computed our ground truth solution 
by numerically integrating Equation \cref{eq:parameq2}
using quadrature.

As mentioned, this problem was also explored by \cite{Chkifa2012}, in which sparse multivariate polynomials are used to estimate the solution with a rate independent of the number of parameters, provided the decay of the coefficient functions is large enough (in $\ell^p$ for some $0 < p < 1$). While this assumption is satisfied in this example, our method's convergence rate greatly depends on the dimension of $\theta$ when the kernel is not adapted to the particular equations and coefficients $A(x, \theta)$. Our results indicate improvement in 
the dependence of 
convergence rates on dimension when the kernel is adapted 
to the regularity of $A$. 
It remains open whether our kernel based approach (which is not specific to parametric equations) can achieve the same dimension independent convergence rates as the ones in \cite{Chkifa2012} (which apply even in the countably infinite dimensional case and which they refer to as breaking
the curse of dimensionality) for parametric elliptic PDEs with rapidly decreasing parametric dependence as specified above (this assumption implies a finite number of effective parameters).

\begin{figure}
    \centering
    \includegraphics[width = 15 cm]{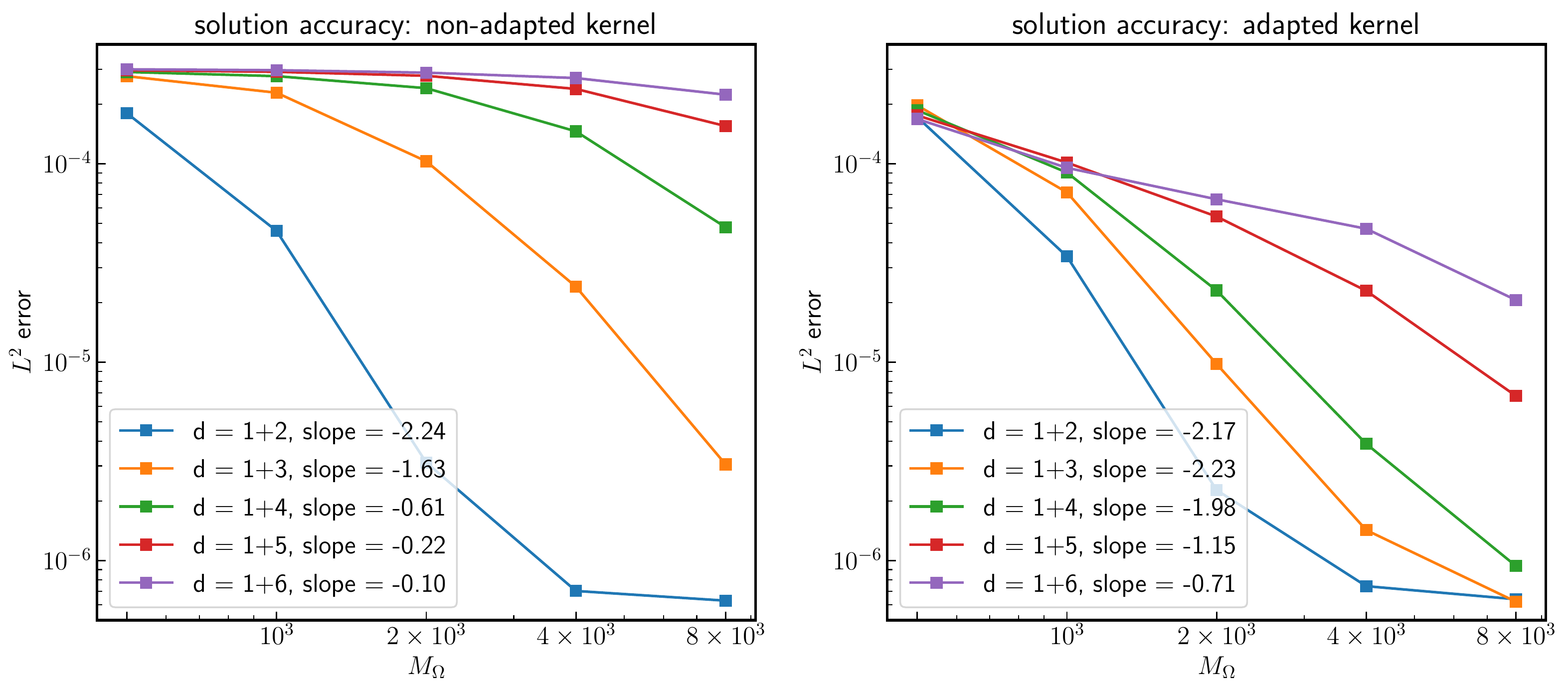}

    \caption{ $L^2$ test error of solutions to Problem~\eqref{eq:parameq2} as a function of the number of collocation points. Left: vanilla gaussian kernel; Right: Gaussian kernel adapted to the regularity of $A$. Reported slopes in the legend denote empirical convergence 
     rates.}
    \label{fig:parametricnumerics}
\end{figure}

\subsection{High Dimensional HJB Equation}
\label{section: High D HJB eqn}
 Consider a prototypical HJB equation:
\begin{equation}\label{eq:yifan-HJB}
\begin{aligned}
  (\partial_t + \Delta)V(\bx,t)-|\nabla V(\bx,t)|^2 &= 0\\
  V(\bx,T) & = g(\bx)\, ,
\end{aligned}
\end{equation}
where, $g(\bx) = \log(\frac{1}{2}+\frac{1}{2}|\bx|^2), \bx \in \mathbb{R}^d, t \in [0,T]$. We are interested in solving $V(\bx_0, 0)$ for some $\bx_0 \in \mathbb{R}^d$.
    We adopt the stochastic differential equation (SDE) formula for representing the solution of the PDEs, following \cite{weinan2017deep,richter2021solving}. More specifically, consider the SDE 
    \begin{equation}
    \label{eqn: forward SDE}
        dX_s = \sqrt{2}dW_s,\ X_0 = \bx_0 \, .
    \end{equation}
    We define $Y_s = V(X_s,s), Z_s = \sqrt{2}\nabla V(X_s,s)$. By Ito's formula, one obtains
    \begin{equation}
    \label{eqn: backward SDE}
        d Y_s = \frac{1}{2}|Z_s|^2 ds +Z_s\cdot dW_s\, .
    \end{equation}
The strategy is to integrate the above SDE backward to $Y_0$. An 
implicit\footnote{Implicit because are integrating backwards in time.} 
Euler discretization from time $t_{n+1}$ to $t_n$ ($\Delta t = t_{n+1}-t_n$) leads to the following equation:
\begin{equation}
    \label{eqn: HJB, one step}
    V(X_{t_{n+1}},t_{n+1}) = V(X_{t_n},t_n) + |\nabla V(X_{t_n},t_n)|^2 \Delta t +\sqrt{2}\nabla V(X_{t_n},t_n) \cdot \xi_{n+1}\sqrt{\Delta t}\, .
\end{equation}

Algorithmically, we sample $J$ different paths of the forward SDE in \eqref{eqn: forward SDE}, namely $X^{(j)}_{t_n}, 1\leq j \leq J$, using the Euler–Maruyama scheme. Then, backward in time, we apply our kernel method, namely to solve the following optimization problem 
\begin{equation}
\label{eqn: HJB optim}
    \left\{
      \begin{aligned}
        \minimize_{u \in \mU}  \quad & \| u \|_\mU \\
        \st \quad  & u(X^{(j)}_{t_n},t_n) + |\nabla u(X^{(j)}_{t_n},t_n)|^2 \Delta t + \sqrt{2}\nabla u(X_{t_n}^{(j)},t_n) \cdot \xi_{n+1}\sqrt{\Delta t} = V(X_{t_{n+1}}^{(j)},t_{n+1})
      \end{aligned}
      \right.
    \end{equation}
to get the solution $V(\cdot, t_n)$, assuming $V(\cdot, t_{n+1})$ has been solved. Iterating this process, we end up with the solution $V(\bx_0, 0)$. We can understand the algorithm as applying our kernel method iteratively with the sample path as the collocation points.


Experimentally, we consider $d =100$ as in \cite{weinan2017deep,richter2021solving}. We aim to solve $V(\bx_0,0)$ for $\bx_0 = 0$. The ground truth is $V(\bx_0,0) = 4.589992$ provided in \cite{weinan2017deep}. We sample $J = 2000$ paths from $\bx_0$ and choose the  inverse quadratic kernel 
$k(\bx,\by;\sigma) = \left(\frac{\|\bx-\by\|^2}{2d\sigma^2}+1 \right)^{-1}\, $. We use the ``linearize-then-optimize" approach to compute an approximate solution to \eqref{eqn: HJB optim}.  The nugget term is set to be $\eta = 10^{-3}$. The result is shown in \Cref{experiments: HJB}. 
\begin{table}[H]
\centering
\begin{tabular}{llllll}
\hline
$\sigma$          & 10     & 25     & 50     & 100 & 200   \\ \hline
Computed solution $V(\bx_0,0)$          & 5.6042 & 4.6366 & 4.6039 & 4.6021 & 4.6021\\
Relative accuracy & 22.10\%      &   1.0154\%     &   0.303\%      & 0.2638\%  & 0.2638\%      \\ \hline
\end{tabular}
\caption{Numerical results for  the HJB equation
\eqref{eq:yifan-HJB}, computing the quantity 
$V(\bx_0, 0)$.
}
\label{experiments: HJB}
\end{table}
We observe that a suitable choice of the lengthscale of the kernel is crucial to obtain an accurate solution. Compared to the relative accuracy of $0.171\%$ (reported in \cite{richter2021solving}) using neural networks (DenseNet like architecture with 4 hidden layers) to solve \eqref{eqn: HJB, one step}, the accuracy of using kernel methods with a simple quadratic kernel is comparable. Moreover, the lengthscale of the kernel is very large, indicating that the solution behavior of this HJB equation is very smooth;  similar “blessings of dimensionality” have been reported and discussed in \cite{richter2021solving}, where they used a constant function (and the terminal function $g$) as ansatz to solve \eqref{eqn: HJB, one step} and obtained very high accuracy\footnote{We anticipate that using the feature map perspective of  kernel methods with constants and $g$ as features will achieve a similar accuracy as in \cite{richter2021solving}. We did not pursue this here to avoid using strong prior information on the solution beyond regularity.}. Thus, this HJB example in dimension $100$ demonstrates again the trade-off between the smoothness of the solution and the curse of dimensionality. 


\section{Conclusions}\label{sec:conclusions}

In this paper, we conducted an error analysis of GP and kernel based methods for solving PDEs. We provided convergence rates under the assumptions that (1) the solution belongs to the RKHS which is embedding to some Sobolev space of sufficient regularity, and (2) the underlying forward and inverse PDE operator is stable in corresponding Sobolev spaces.

Our analysis relies on the crucial minimizing norm property of the numerical solution in the kernel/GP methodology.
The analysis could be seamlessly generalized to the function class of NNs and other norms such as non-quadratic norms if we can formulate the training process as a minimization problem over the related norm. 

We emphasize that our convergence rates hold for the exact minimizer of the minimization problem. In practice, finding such a minimizer algorithmically can be a separate and challenging problem. Our numerical experience suggests that Gauss-Newton iterations usually perform well, and typically, 2-5 iterations are sufficient for convergence. Therefore, we can combine the error analysis in this paper and the fast implementation of the algorithm in \cite{chen2023sparse} to obtain a near-linear complexity solver for nonlinear PDEs with rigorous accuracy guarantee.

It is worth mentioning that this paper focuses only on analyzing the MAP estimator within the GP interpretation. Exploring the posterior distribution of the GP can provide a means for quantifying uncertainty in the solution. In particular, analyzing the posterior contraction is an interesting direction for future research.

{
    
    

\section*{Acknowledgments}
The authors gratefully acknowledge support by  the Air Force Office of Scientific Research under MURI award number FA9550-20-1-0358 (Machine Learning and Physics-Based Modeling and Simulation). BH acknowledges support by the National Science Foundation grant number 
NSF-DMS-2208535 (Machine Learning for Bayesian Inverse Problems). HO also acknowedges support by the Department of Energy under award number DE-SC0023163 (SEA-CROGS: Scalable, Efficient and Accelerated Causal Reasoning Operators, Graphs and Spikes for
Earth and Embedded Systems).

\bibliographystyle{siamplain}
\bibliography{PDE-GP-Refs}

\appendix

\section{Sobolev Sampling Inequalities on Manifolds}\label{sec:AppA}
Below we collect useful sampling inequalities for Sobolev functions defined on smooth manifolds with corners. 
Following \cite[Chs.~1, 16]{lee-manifold} we consider a smooth, compact
Riemannian manifold $\M \subset \R^d$ of dimension $k \le d$ with corners, i.e., 
a Riemannian manifold with a smooth structure with corners; see \cite[Ch.~16]{lee-manifold}.
On such a manifold we define the natural geodesic distance 
\begin{equation*}
 \rho_\M: \M \times \M \to \R, \qquad   \rho_\M (x, y) := \inf \int_0^1 \| \dot{\ell}(t) \| dt,
\end{equation*}
where the infimum is taken over all piecewise smooth paths $\ell: [0,1] \mapsto \M$ 
satisfying the boundary conditions $\ell(0) = x$ and $\ell(1) = y$, and $\| \dot{\ell} (t)\|$ is 
the length of the tangent vector $\dot \ell (T)$ under the Riemannian metric. 

Following \cite{fuselier2012scattered} (see also \cite[Sec.~4.3]{taylor}) we further consider 
the Sobolev spaces  $H^k(\M)$ of functions defined on $\M$ as follows: 
Let $\mcl{A}= \{ M_j, \Psi_j\}_{j=1}^N$ be an atlas for $\M$ and let $\{ \kappa_j\}$ be 
a partition of unity of $\M$, subordinate to $M_j$. Then given functions $u: \M \to \R$ we define 
the Sobolev norms and the associated Sobolev spaces $H^s(\M)$ as 
\begin{equation*}
   H^s(\M):= \{ u: \M \to \R \mid \| u \|_{H^s(\M)} < + \infty \}, \qquad
    \| u \|_{H^s(\M)}:= \left( \sum_{j=1}^N \| \pi_j(u) \|_{H^s(\Xi_j)}^2 \right)^{1/2}, 
\end{equation*}
where the maps $\pi_j$ are defined as 
\begin{equation*}
    \pi_j(f) := 
    \left\{
    \begin{aligned}
    & \kappa_j ( f( \Psi_j^{-1}(y) ) ), &&\text{if  } y \in \Psi_j (M_j), \\
    & 0 && \text{otherwise.}
    \end{aligned}
    \right.
\end{equation*}
and the sets $\Xi_j$ are given by 
\begin{equation*}
        \Xi_j := \left\{
    \begin{aligned}
        & \R^k && \text{if $\Psi_j$ is an interior chart,}\\
        & \{ (x_1, \dots, x_k) \in \R^k | x_1 \ge 0 \} && \text{if $\Psi_j$ is a boundary chart,}\\ 
        & \{ (x_1, \dots, x_k) \in \R^k | x_1 \ge 0, \dots, x_k \ge 0 \} && \text{if $\Psi_j$ is a corner chart.}\\ 
    \end{aligned}
    \right.
\end{equation*}

Put simply, the Sobolev spaces $H^s(\M)$ are functions on $\M$ that, locally after 
the flattening of the manifold belong to the standard Sobolev spaces $H^s$.
With these notions at hand we then recall the following result of \cite{fuselier2012scattered},
which was proven by those authors for smooth embedded manifolds without boundary or corners. 
However, a brief investiation of the proof of that result reveals that it can 
immediately be generalized to our setting with manifolds with corners. In fact, the idea 
of the proof is to use the atlas to locally flatten the manifold and apply 
classic sampling theorems such as \cite[Thm.~4.1]{arcangeli2007extension} on each patch.
The only difference in the case of manifolds with corners is that the patches do not 
only map to $\R^k$ but rather to the subspaces $\Xi_j$ depending on whether 
the corresponding chart is an interior, boundary, or corner chart.



\begin{proposition}[{\cite[Lem.~10]{fuselier2012scattered}}] \label{fuselier-sobolev-bound}
Suppose $\M \subset \R^d$ is a smooth, compact, Riemannian manifold with corners, of dimension $k$ and let $s > k/2$ and $r \in \mathbb{N}$ satisfy
$0 \le r \le \lceil s \rceil - 1$. Let $X \subset \M$ be a discrete set with mesh norm $h_\M$
defined as 
\begin{equation*}
    h_\M := \sup_{x' \in \M}  \inf_{x \in X}  \rho_\M(x , x').
\end{equation*}
Then there is a 
constant $h_0>0$ depending only on $\M$ such that if $h_\M < h_0$ and if $u \in H^s(\M)$ satisfies 
$u|_X = 0$ then 
  \begin{equation*}
      \| u \|_{H^r(\M)} \le C h_\M^{s - r} \| u \|_{H^s(\M)}.
  \end{equation*}
Here $C >0$ is a constant independent of $h_\M$ and $u$.
\end{proposition}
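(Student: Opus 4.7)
The plan is to follow the localization strategy already used in \cite{fuselier2012scattered} for the boundaryless case: use the atlas $\mcl{A}=\{(M_j,\Psi_j)\}_{j=1}^N$ together with the subordinate partition of unity $\{\kappa_j\}$ to reduce the global sampling inequality on $\M$ to a family of classical sampling inequalities on Euclidean patches $\Xi_j\subset\R^k$, each of which is either all of $\R^k$, a closed half-space, or a product of half-lines. Since $\|u\|_{H^r(\M)}^2=\sum_j \|\pi_j(u)\|_{H^r(\Xi_j)}^2$ and likewise for the $H^s$ norm, it suffices to prove the desired inequality for each localized function $u_j:=\pi_j(u)$ on its chart domain and then sum over the finite atlas.

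First I would compare the geodesic distance $\rho_\M$ with the pullback of the Euclidean distance in each chart: because $\M$ is compact and the charts are smooth diffeomorphisms onto their images, these two distances are equivalent on $M_j$ with constants that depend only on the atlas. Consequently, if $h_\M$ is the geodesic mesh norm of $X$, then for each $j$ the image $\Psi_j(X\cap M_j)$ has Euclidean mesh norm $\tilde h_j\le c\,h_\M$ on $\Psi_j(M_j)$, with $c$ independent of $u$ and $h_\M$. The compact support of $\kappa_j$ in $M_j$ ensures that the support of $u_j$ sits strictly inside $\Psi_j(M_j)$, so the mesh norm of the zero set of $u_j$ on a slight enlargement of that support is still controlled by $\tilde h_j$.

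Next, fix $j$ and let $u_j:=\pi_j(u)$. By construction $u_j$ vanishes on $\Psi_j(X\cap M_j)$ and has compact support in $\Xi_j$. To apply a standard Euclidean sampling inequality such as \cite[Thm.~4.1]{arcangeli2007extension}, which is stated on $\R^k$, I would extend $u_j$ to $\tilde u_j\in H^s(\R^k)$ by a Stein-type extension operator. This operator is bounded from $H^s(\Xi_j)$ into $H^s(\R^k)$ because $\Xi_j$ is either $\R^k$, a half-space, or a product of half-lines, and all three are Lipschitz domains with the uniform cone property for which Stein extension is available. The extension preserves the zero condition on the set $\Psi_j(X\cap M_j)\subset\Xi_j$, so the Euclidean sampling theorem yields $\|\tilde u_j\|_{H^r(\R^k)}\le C\tilde h_j^{s-r}\|\tilde u_j\|_{H^s(\R^k)}$ whenever $\tilde h_j$ is smaller than some constant $h_0^{(j)}$. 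Restricting back to $\Xi_j$ and combining with the extension bound $\|\tilde u_j\|_{H^s(\R^k)}\le C\|u_j\|_{H^s(\Xi_j)}$ yields the desired patch-wise estimate. Taking $h_0:=\min_j h_0^{(j)}/c$ and summing over the finite atlas delivers the global inequality.

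The main obstacle I anticipate is verifying that this local step really carries over uniformly to the corner charts, where $\Xi_j$ is a product of half-lines rather than a half-space. This is a purely technical check: Stein extension extends to such Lipschitz corner regions, and the statement of the classical sampling inequality in \cite{arcangeli2007extension} depends only on the vanishing of the function on a sufficiently dense discrete set inside its support, not on any geometric property of that support. Once those two ingredients are in place, the corner case is no harder than the half-space case, and the proof concludes exactly as in \cite{fuselier2012scattered}.
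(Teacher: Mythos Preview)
Your proposal is correct and follows essentially the same approach as the paper: the paper does not give a detailed proof but only a sketch, explaining that one uses the atlas and partition of unity to reduce to Euclidean patches $\Xi_j$ and then applies the classical sampling inequality of \cite[Thm.~4.1]{arcangeli2007extension} on each patch, the only novelty over the boundaryless case in \cite{fuselier2012scattered} being that some charts map to half-spaces or corner regions. Your write-up fills in exactly these details (chart-wise distance comparison, compact support of the $\pi_j(u)$, summation over the finite atlas), so there is nothing to add.
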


\section{Bounds on Fill Distances}\label{sec:AppB}
\label{sec: Bounds on fill-in distance}
This section collects a result from \cite{reznikov2016covering} for bounding the fill-in distance for randomly distributed points on a manifold. 

Assume $(\M, \rho)$ is a metric space, and $\mu$ is a finite positive Borel measure supported on $\M$. Let $X = \{x_1,...,x_N\}$ be a set of $N$ points, independently and randomly drawn from $\mu$. Define the fill-in distance 
\begin{equation}
    h_\M = \sup_{x' \in \M} \inf_{x \in X} \rho(x,x')\, .
\end{equation}
Then, \cite[Thm. 2.1]{reznikov2016covering} implies the following:
\begin{proposition}
\label{prop: fill-in distance bound cited}
    Suppose $\Phi$ is a continuous non-negative strictly increasing function on $(0,\infty)$ satisfying $\Phi(r) \to 0$ as $r \to 0^{+}$. If there exists a positive number $r_0$ such that $\mu(B(x,r)) \geq \Phi(r)$ holds for all $x \in \M$ and every $r < r_0$, then there exist positive constants $c_1,c_2,c_3$ and $\alpha_0$ such that for any $\alpha>\alpha_0$, we have
    \begin{equation}
        \mathbb{P} \left[h_\M \geq c_1 \Phi^{-1}\left(\frac{\alpha \log N}{N}\right)\right]\leq c_2 N^{1-c_3\alpha}.
    \end{equation}
\end{proposition}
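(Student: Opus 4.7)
The plan is a standard net-plus-union-bound argument, in which the measure lower bound $\mu(B(x,r))\ge\Phi(r)$ plays the role that an Ahlfors regularity assumption usually plays. The event $\{h_\M\ge r\}$ forces some ball of radius comparable to $r$ to be empty of samples; covering $\M$ by a finite net reduces this to a union over finitely many fixed balls, each of which has exponentially small probability of being empty once $N$ is large.

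Concretely, I would fix $r>0$ with $r<r_0$, to be optimized later, and let $\{y_j\}_{j=1}^M\subset\M$ be a maximal $r/2$-separated set in $(\M,\rho)$. Maximality automatically makes it an $r/2$-net, so for every $x'\in\M$ there exists $j$ with $\rho(x',y_j)<r/2$. By $r/2$-separation the balls $\{B(y_j,r/4)\}_{j=1}^M$ are pairwise disjoint, so summing the lower bound on $\mu(B(y_j,r/4))$ yields
\[ M\,\Phi(r/4)\;\le\;\sum_{j=1}^{M}\mu\bigl(B(y_j,r/4)\bigr)\;\le\;\mu(\M), \qquad\text{i.e.}\qquad M\le \mu(\M)/\Phi(r/4). \]
Next, if $h_\M\ge r$ then some $x'\in\M$ satisfies $B(x',r)\cap X=\emptyset$; picking $y_j$ within $r/2$ of $x'$ gives $B(y_j,r/2)\subseteq B(x',r)$, so $B(y_j,r/2)\cap X=\emptyset$ as well. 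Since the $x_i$ are i.i.d.\ with law $\mu/\mu(\M)$, a union bound gives
\[ \mathbb{P}\bigl[h_\M\ge r\bigr]\;\le\;\sum_{j=1}^{M}\Bigl(1-\tfrac{\mu(B(y_j,r/2))}{\mu(\M)}\Bigr)^{N}\;\le\;\frac{\mu(\M)}{\Phi(r/4)}\,\exp\!\Bigl(-\tfrac{N\,\Phi(r/2)}{\mu(\M)}\Bigr). \]

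To conclude, I would set $r=c_1\Phi^{-1}\!\bigl(\alpha\log N/N\bigr)$ with $c_1=2$, so that $\Phi(r/2)=\alpha\log N/N$ and the exponential factor becomes $N^{-\alpha/\mu(\M)}$. The prefactor $\mu(\M)/\Phi(r/4)$ is controlled by comparing $\Phi$ at the two scales $r/4$ and $r/2$: under a mild scale-comparability property of $\Phi$ (for example a doubling-type lower bound $\Phi(r/4)\ge c\,\Phi(r/2)$, which holds automatically in the applications of interest where $\Phi(r)\asymp r^{k}$), the prefactor is at most $C\,N/(\alpha\log N)\le C\,N$. Absorbing this single factor of $N$ into the exponent gives $\mathbb{P}[h_\M\ge r]\le c_2 N^{1-c_3\alpha}$ with $c_3=1/\mu(\M)$; choosing $\alpha_0$ so that $\alpha_0\,c_3>1$, and so that $\Phi^{-1}(\alpha\log N/N)<r_0/2$ for $\alpha>\alpha_0$, makes every step above legitimate.

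The main obstacle is precisely the prefactor step: without any regularity on $\Phi$ beyond monotonicity and $\Phi(0^+)=0$, the ratio $\Phi(r/2)/\Phi(r/4)$ can be arbitrarily large, and the naive bound on $M$ then fails to give a polynomial-in-$N$ prefactor. In the manifold settings used in \Cref{prop: fill-in dist bound} the measure is Ahlfors regular and $\Phi(r)\asymp r^{d}$ (or $r^{d-1}$ on the boundary), so the comparability is automatic; in a fully general metric-measure setting one must either impose doubling on $\Phi$ or replace the $r/2$-net by a $\kappa r$-net with $\kappa$ chosen small enough (as a function of $\alpha$) to absorb $\log(\mu(\M)/\Phi(\kappa r))$ into the exponent. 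A minor secondary technicality is ensuring $r<r_0$, which forces the lower bound $\alpha_0$ to be compatible with the admissible range $\Phi^{-1}(\alpha\log N/N)<r_0$ for all sufficiently large $N$.
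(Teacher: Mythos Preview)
The paper does not give its own proof of this proposition; it is quoted from \cite[Thm.~2.1]{reznikov2016covering} and then invoked to prove \Cref{prop: fill-in dist bound}. So there is no in-paper argument to compare against.

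Your covering-net-plus-union-bound strategy is the standard one and is carried out correctly. You have also correctly isolated the one genuine obstruction for fully general $\Phi$: after setting $\Phi(r/2)=\alpha\log N/N$, the packing bound on the net cardinality involves $\Phi(r/4)$, and without a doubling-type inequality $\Phi(r/4)\ge c\,\Phi(r/2)$ the prefactor need not be polynomial in $N$. For the power-law $\Phi$ actually used in \Cref{prop: fill-in dist bound} this is automatic, so your argument suffices for every application the paper makes of the proposition. Your alternate suggestion of letting the net scale $\kappa$ depend on $\alpha$ does not quite rescue the general statement as written, since the constants $c_1,c_2,c_3$ in the conclusion must be independent of $\alpha$.

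One small slip: the requirement $\Phi^{-1}(\alpha\log N/N)<r_0/2$ cannot be enforced by taking $\alpha>\alpha_0$, because $\Phi^{-1}$ is increasing and larger $\alpha$ makes the left side larger. Rather, for each fixed $\alpha$ the inequality holds for all sufficiently large $N$; for the remaining small $N$ the right-hand side $c_2 N^{1-c_3\alpha}$ exceeds $1$ (once $c_2$ is chosen suitably) and the bound is vacuous.
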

We use this proposition to prove \Cref{prop: fill-in dist bound}.
\begin{proof}[Proof of \Cref{prop: fill-in dist bound}]
We apply \Cref{prop: fill-in distance bound cited}. For the bounded domain $\Omega \subset \R^d$, we know that there exists a constant $C$ such that $\Phi(r) = C r^d$ will satisfy the assumption in \Cref{prop: fill-in distance bound cited}.
Moreover, we choose $\alpha$ such that $c_2M_{\Omega}^{1-c_3\alpha} \leq \delta$. This implies that
$\alpha \geq \frac{1}{c_3\log M_{\Omega}}\log(c_2M_{\Omega}/\delta)$. Pick $\alpha = \frac{C'}{c_3\log M_{\Omega}}\log(c_2M_{\Omega}/\delta)$ for some $C'\geq 1$ such that $\alpha \geq \alpha_0$. Then \Cref{prop: fill-in distance bound cited} shows that with probability at least $1-\delta$, \[h_{\Omega} \leq c_1 \Phi^{-1}\left(\frac{\alpha \log M_{\Omega}}{M_{\Omega}}\right) \leq C''\left(\frac{\log(M_{ \Omega}/\delta)}{M_{\Omega}}\right)^{1/d},  \]
where $C''$ is a constant independent of $M_{\Omega}$ and $\delta$. The bound on $h_{\partial \Omega}$ can be proved similarly by choosing $\Phi(r) = C r^{d-1}$.
\end{proof}

\section{The Choice of Nugget Terms}\label{sec:AppC}
For numerical stability, we add a diagonal adaptive nugget term to the kernel matrix in our computation such that
\[u^{\ell+1}(x) = K(x,\bphi^l)[K(\bphi^l,\bphi^l)+\eta\mathrm{diag}(K(\bphi^l,\bphi^l))]^{-1}\begin{pmatrix}
\left(f-u^{\ell}+\mP'(u^{\ell})u^{\ell}\right)|_{\bs_{\Omega}}\\
\left(g-u^{\ell}+\mB'(u^{\ell})u^{\ell}\right)|_{\bs_{\partial\Omega}}
\end{pmatrix}\]
Typically $\eta = 10^{-10}$. This nugget term is similar to the adaptive nugget term proposed in \cite{CHEN2021110668}. It is much more effective than the naive choice of $K(\bphi^l,\bphi^l)+\eta I$, since the conditioning of the interior block and the boundary block in the kernel matrix differs dramatically.

\end{document}